\newtheorem{theorem}{Theorem}[section]
\newtheorem{lemma}[theorem]{Lemma}
\theoremstyle{definition}
\numberwithin{equation}{section}
\begin{document}

\title{An improved lower bound \\for star-shaped Kakeya sets}

\author{Shaoqi Li}
\address{Department of Mathematics, Sun Yat-sen University, Guangzhou,
 Guangdong 510275, P.R. China}
\email{lishq76@mail2.sysu.edu.cn}

\subjclass[2020]{Primary 28A75, 52A40}
\date{29 May 2026}

\dedicatory{}
\keywords{Kakeya set, star-shaped, lower bound}

\begin{abstract}
  In 1971, Cunningham proved that every star-shaped Kakeya set $E\subset\mathbb{R}^2$ satisfies $|E| \geq \pi/108$. In this paper, we show that Cunningham's bound is not optimal and can be improved to $|E| \geq \pi/98$.
\end{abstract}

\maketitle
\section{Introduction}
The Kakeya needle problem~\cite{999733023102121} asks: ``What is the minimal area of a region in which a unit needle can be continuously rotated through 180 degrees with its ends reversed?" 
In 1928, Besicovitch~\cite{Besicovitch1928} showed that such sets can have arbitrarily small area. A related question posed by Cunningham \cite{Cunningham_1971} considers sets that contain a unit line segment in every direction (without requiring continuous rotation), with the additional requirement that the set is \textit{star-shaped} (that is, there exists a point $O$ in the set such that for every $x$ in the set, the line segment $Ox$ lies entirely within the set). We will call such a set a \textit{star-shaped Kakeya set}.
Cunningham~\cite{Cunningham_1971} showed that every star-shaped Kakeya set $E$ allowing continuous rotation (hence measurable) has positive area and satisfies $|E|\geq \pi/108$. On the other hand, Cunningham and Schoenberg~\cite{Cunningham_Schoenberg_1965} showed that $\inf|E|\leq \frac{(5-2\sqrt{2})}{24}\pi = (0.09048\cdots)\pi$, by generalizing Kakeya's deltoid construction. 

Cunningham's proof proceeds by decomposing the set $E$ into two parts using a cutoff circle centered at $O$, and establishing his lower bound by combining the estimates for both parts. In the present paper, we improve Cunningham's lower bound to $ |E|\geq \pi/98 $ by establishing lower bounds for the circular cross-sections of $E$. \linebreak
\indent As noted in \cite{Cunningham_1971}, a star-shaped Kakeya set \(E\) need not be measurable. For generality, we consider all such sets and use the notation \(\mathcal{L}_2^*(E)\) to denote the \textit{Lebesgue outer measure} of \(E\). For the circular cross-sections \(E \cap S_r\), we use \(\mathcal{H}_1^*(E\cap S_r)\) to denote their \textit{one-dimensional Hausdorff outer measure}. For direction sets \(A \pmod \pi \subset [0, \pi)\), we use \(\mathcal{L}_1^*(A)\) to denote the \textit{one-dimensional Lebesgue outer measure}. For $\alpha_1, \alpha_2 \in A$, we use $|\alpha_1 - \alpha_2|$ to denote $\alpha_1 - \alpha_2\pmod\pi$. To avoid ambiguity, we will assume without loss of generality that $\alpha_1 > \alpha_2$ and will rotate the coordinate system so that $\alpha_2 = 0$. When the set under consideration is measurable, we simply use \(|\cdot|\) to denote its measure (such as \(|E|\), \(|E \cap S_r|\), and \(|A|\)). These conventions are used throughout the paper. 

In this generality, our main result can be stated as follows.
\begin{theorem}
\label{the:mylowerbound}
Every star-shaped Kakeya set $E$ satisfies
\begin{equation*}
    \mathcal{L}_2^*(E) \geq \frac{\pi}{98}.
\end{equation*}
\end{theorem}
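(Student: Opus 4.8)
The plan is to set up polar coordinates centered at the star center $O$ and analyze the circular cross-sections $E \cap S_r$ directly, following the strategy announced in the introduction. For each radius $r$, let $A_r \subset [0,\pi)$ denote the set of directions $\theta$ such that the line through $O$ making angle $\theta$ meets $E$ at radius at least $r$ on at least one side; more carefully, I would track, for each direction $\theta$, the portion of the unit segment in direction $\theta$ that lies at distance $\geq r$ from $O$. Because $E$ contains a unit segment in every direction and is star-shaped, a segment in direction $\theta$ positioned somewhere in the plane pulls back (via the star property, joining each of its points to $O$) to force $E$ to contain a substantial arc-length at radius $r$. The first step is to make this quantitative: show that $\mathcal{H}_1^*(E \cap S_r) \geq r \cdot \mathcal{L}_1^*(G_r)$ for an appropriate direction set $G_r$ whose measure I can bound below in terms of $r$.

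The key geometric input is the following trade-off. Fix a direction $\theta$ and a unit segment $I_\theta \subset E$ in that direction. The segment $I_\theta$ lies at some signed distance from $O$; let $d(\theta)$ be the distance from $O$ to the line containing $I_\theta$. If $d(\theta)$ is small (the line passes near $O$), then much of $I_\theta$ lies far from $O$, contributing to cross-sections $E \cap S_r$ for many values of $r$; by the star property the entire triangle-like region joining $O$ to $I_\theta$ lies in $E$, so these cross-sections have guaranteed length near direction $\theta$. If $d(\theta)$ is large, then $I_\theta$ itself forces area far from $O$. The second step is therefore to split $[0,\pi)$ according to the value of $d(\theta)$ relative to a threshold, and in each regime derive a lower bound on the angular measure of directions contributing to $E \cap S_r$. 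Integrating $\mathcal{H}_1^*(E \cap S_r)$ against $dr$ via the coarea/polar decomposition $\mathcal{L}_2^*(E) \geq \int_0^\infty \mathcal{H}_1^*(E \cap S_r)\, dr$ then yields the total area bound.

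Concretely, I expect the optimization to take the shape of choosing a cutoff radius $\rho$ (the ``cutoff circle'' of Cunningham) and, instead of Cunningham's cruder pairing, bounding $\int_0^\rho \mathcal{H}_1^*(E \cap S_r)\,dr$ and $\int_\rho^\infty \mathcal{H}_1^*(E \cap S_r)\,dr$ separately, then minimizing the sum over $\rho$. The improvement from $\pi/108$ to $\pi/98$ should come from the sharper cross-sectional estimate: Cunningham effectively bounds the inner part by the area of a disk, whereas tracking $\mathcal{L}_1^*(G_r)$ direction-by-direction saves a constant factor. I would carry out the calculation by writing $f(r) = \mathcal{H}_1^*(E \cap S_r)/r$ as a lower bound for the angular measure, establishing a pointwise inequality of the form $f(r) \geq \phi(r)$ for an explicit $\phi$ coming from the two-regime analysis, and then evaluating $\int r\,\phi(r)\,dr$.

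The hard part will be step two: establishing the sharp direction-counting inequality that controls $\mathcal{L}_1^*(G_r)$ from below, because $E$ need not be measurable and the segments $I_\theta$ may be positioned adversarially relative to $O$. I must work throughout with outer measures and verify that the star property genuinely transfers angular coverage at radius $r$ without assuming measurability of the slices; the subadditivity and monotonicity of $\mathcal{L}_1^*$ and $\mathcal{H}_1^*$, together with a careful definition of $G_r$ as a \emph{superset}-controlled projection, should suffice, but pinning down the extremal configuration that saturates the bound — and confirming it forces the constant $98$ rather than something weaker — is where the real work lies.
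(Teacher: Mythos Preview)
Your plan has the right high-level shape --- polar integration of $\mathcal{H}_1^*(E\cap S_r)$, a split on the needle's position --- but it is missing the two concrete mechanisms that actually drive the constant from $108$ down to $98$, and it contains one structural misconception.

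First, the cross-sectional bound cannot be obtained in the form $\mathcal{H}_1^*(E\cap S_r)\geq r\cdot\mathcal{L}_1^*(G_r)$ by ``direction-by-direction'' bookkeeping. Each triangle $\Delta_\alpha$ meets $S_r$ in one or two arcs, and arcs from different directions can overlap heavily; there is no map from directions to disjoint angular intervals. The paper instead runs a Vitali-type selection on the arcs themselves and proves a reverse comparison: for a fixed arc $\Gamma$ with central angle $\theta$, the set $J_\Gamma=\{\alpha:\Gamma_{\alpha,1}\subset\Gamma\}$ satisfies $|J_\Gamma|\leq g(r)\theta$ for an explicit $g(r)$. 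This, combined with the Vitali covering, yields $\mathcal{H}_1^*(E\cap S_r)\gtrsim \frac{r}{g(r)}\cdot(\text{direction measure})$. Your proposal gives no hint of this inversion, and without it the overlap problem is fatal.

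Second, the bound $|J_\Gamma|\leq g(r)\theta$ is simply false if needles are allowed to lie arbitrarily far from $O$: as a needle recedes, its arc on $S_r$ shrinks while $\alpha$ stays put, so $|J_\Gamma|/\theta\to\infty$. The paper therefore introduces a cutoff radius $r_1$ and splits on whether $\Delta_\alpha\subset B_{r_1}$; in the complementary case (a positive proportion of needles lie outside $B_{r_1-1}$) a separate Cunningham-style disjointness argument gives area directly. Your $d(\theta)$ split does not capture this: small $d(\theta)$ does not prevent the needle from being far away.

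Third, and structurally, the cross-section estimate alone does not reach $\pi/98$. The paper feeds the improved inner bound $a_0$ for $\mathcal{L}_2^*(E\cap B_{r_0})$ back into Cunningham's outer framework via
\[
\mathcal{L}_2^*(E)\geq \frac{\pi}{4}f(r_0)+\Bigl(1-\frac{f(r_0)}{2r_0^2}\Bigr)a_0,
\]
so that Cunningham's $\frac{\pi}{4}f(r_0)$ is still doing most of the work and the cross-sectional gain is an additive correction. Simply integrating $\int r\,\phi(r)\,dr$ as you propose discards this, and a bare cross-sectional integral gives a much weaker constant. The final step is then a numerical optimization over four parameters ($a$, $r_0$, $p$, $\lambda$) balancing the two cases; this is where the $98$ actually appears.
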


Before proceeding to the proof of Theorem \ref{the:mylowerbound}, we begin by recalling some necessary definitions from \cite{Cunningham_1971}. Without loss of generality, we will assume that $O$ is the origin. Let $S_r$ be the circle of radius $r$ centered at $O$ and let $B_r$ be the \textit{open} disk with the same center and radius. A unit line segment pointing in direction $\alpha$ (i.e., forming an angle $\alpha$ with the $x$-axis) will be called a \textit{needle} and will be denoted by $l_\alpha$. Let $\Delta_\alpha$ denote the \textit{closed} triangle with base $l_\alpha$ and $O$ being a vertex. The height of the triangle $\Delta_\alpha$ will be denoted by $\delta(\Delta)$ (when $\delta(\Delta)=0$, $\Delta_\alpha$ is understood as a line segment). Let $E\subset\mathbb{R}^2$ be a star-shaped Kakeya set. We will always assume that $l_\alpha\subset E$. Thus, $\Delta_\alpha\subset E$. Denote $\Delta_\alpha^{\text{\rm ext}}=\Delta_\alpha\cap B_r^c$ and $\Delta_\alpha^{\text{\rm int}}=\Delta_\alpha\cap B_r$, the parts of $\Delta_\alpha$ outside and inside $B_r$ respectively. We will call $\Delta_\alpha$ and $\Delta_\beta$ ``disjoint" if their interiors are disjoint, that is,  $\mathring\Delta_\alpha\cap\mathring\Delta_\beta=\varnothing$. When the context is clear, we will often omit $\alpha$ and simply write $\Delta,\Delta^{\text{\rm ext}},\Delta^{\text{\rm int}}$ in place of $\Delta_\alpha,\Delta_\alpha^{\text{\rm ext}},\Delta_\alpha^{\text{\rm int}}$ respectively.
\section{Generalizing Cunningham's Lower Bound}
\label{sec:2}
In this section, we extend Cunningham's method for bounding the area of star-shaped Kakeya sets by establishing a bi-parametrized lower bound that depends on the direction set \( A \subset [0, \pi) \) and the cutoff radius \( r \). Specifically, we prove the following:

\begin{theorem}
   Let $r\in[0.15,0.5]$ and let $A\subset[0, \pi).$ Then we have
    \begin{equation}
        \mathcal{L}_2^*(\bigcup_{\alpha\in A}\Delta_\alpha) \geq \frac{\mathcal{L}_1^*(A)}{4}f(r),
        \label{eq:phiareamin}
    \end{equation}
    where $f(r) := \frac{1}{2}r(2r-1)^2$.
    \label{the:phiareamin}
\end{theorem}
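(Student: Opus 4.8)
The plan is to bound the area of the star-shaped set $U := \bigcup_{\alpha\in A}\Delta_\alpha$ by slicing it with the circles $S_\rho$ centered at the common apex $O$ and integrating the resulting cross-sectional lengths, matching the circular cross-section philosophy announced in the introduction. Since each $\Delta_\alpha$ contains the segment from $O$ to each of its points, $U$ is itself star-shaped about $O$; hence for every direction $\theta$ the radial slice of $U$ is an interval $[0,R(\theta))$, and the coarea formula for the distance function $x\mapsto|x|$ suggests $\mathcal{L}_2^*(U)\geq\int_r^{\infty}\mathcal{H}_1^*(U\cap S_\rho)\,d\rho$. The first technical point is that $A$, the assignment $\alpha\mapsto\Delta_\alpha$, and hence $U$ need not be measurable; I would therefore work throughout with outer measures, replacing this identity by the corresponding lower-integral inequality (lower-bounding $\mathcal{L}_2^*(U)$ by the lower integral over $\rho$ of $\mathcal{H}_1^*(U\cap S_\rho)$), so that only set containments, never exact measurability of $U$, are ever invoked.

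The geometric heart is a lower bound on each cross-section $\mathcal{H}_1^*(U\cap S_\rho)$ in terms of $\mathcal{L}_1^*(A)$, and here I would use two facts. First, because $\Delta_\alpha$ is star-shaped about $O$, the radial projection $x\mapsto\rho x/|x|$ carries every point of the base segment $I_\alpha$ lying outside the disk $D_\rho$ (radius $\rho$, center $O$) into $U\cap S_\rho$; thus $U\cap S_\rho$ contains the radial shadow on $S_\rho$ of $\big(\bigcup_\alpha I_\alpha\big)\setminus D_\rho$. Second, for $\rho<\tfrac12$ a unit segment cannot be swallowed by $D_\rho$: a length of at least $1-2\rho$ of $I_\alpha$ protrudes, which is the source of the factor $2r-1$ in $f$. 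The difficulty is that distinct directions $\alpha$ cast \emph{overlapping} shadows (the triangles overlap heavily near $O$), so shadow measure is not additive in $\alpha$. To control this I would run a double-counting (incidence) argument between polar directions $\theta$ and segment directions $\alpha$: integrating over $\theta$ the indicator that the outward ray in direction $\theta$ meets $I_\alpha\setminus D_\rho$ reproduces $\int_A$ of the angular width of each protruding shadow, while the covered angular measure is this integral divided by the incidence multiplicity. The measure-preserving correspondence $\alpha\mapsto\alpha\pm\tfrac{\pi}{2}$ between a segment's direction and the angular location of its protruding ends is the mechanism that transfers $\mathcal{L}_1^*(A)$ onto the circle and, after the multiplicity estimate, produces the constant $\tfrac14$.

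With the cross-sectional bound in hand, I would integrate it over $\rho\geq r$ (equivalently, over the annulus $\{|x|\geq r\}$); the protruding length $2\rho-1$ together with the radial weight produced by the shadow geometry integrates to $f(r)=\tfrac12 r(2r-1)^2$, and the hypothesis $r\geq 0.15$ keeps the relevant quantities in the range where this cross-sectional estimate is valid and positive, near the maximizer $r=\tfrac16$ of $f$, at which the bound reproduces Cunningham's $\tfrac{\pi}{108}$ for $A=[0,\pi)$ since $\tfrac{\pi}{4}f(\tfrac16)=\tfrac{\pi}{4}\cdot\tfrac{1}{27}=\tfrac{\pi}{108}$. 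I expect the main obstacle to be the multiplicity control in the incidence step: thin triangles make each per-direction contribution individually negligible while forcing large overlap, so the estimate must trade the smallness of individual shadows against the smallness of their overlap, and making this trade-off sharp enough to yield the clean constant $\tfrac14$ rather than a lossy one is the crux. A secondary, purely measure-theoretic obstacle is justifying the coarea slicing and the shadow containments entirely at the level of outer measures for the possibly non-measurable $U$.
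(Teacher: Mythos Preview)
Your plan differs from the paper's and has a genuine obstruction at the step you yourself flag as ``the crux.'' The multiplicity you need to bound---$\mathcal L_1^*\{\alpha\in A:\text{the ray in direction }\theta\text{ meets }I_\alpha\setminus D_\rho\}$ for fixed $\theta$---admits no a~priori upper bound short of $\mathcal L_1^*(A)$ itself: each needle $I_\alpha$ may be translated freely, so one can arrange every $I_\alpha$ (with $\alpha\ne\theta\bmod\pi$) to cross a single fixed ray outside $D_\rho$, collapsing your incidence inequality to a triviality. The correspondence $\alpha\mapsto\alpha\pm\tfrac\pi2$ does not rescue this, because it locates the foot of the perpendicular, not the angular position of the shadow; for $\delta$ close to $\rho$ the shadow can sit at essentially any polar angle. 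Your per-$\alpha$ contribution also degenerates: a needle with $\delta(\Delta_\alpha)=0$ has shadow of measure zero on every $S_\rho$, so directions with small $\delta$ contribute nothing to any cross-section even though they can fill a set of full measure in~$A$. Relatedly, the heuristic that integrating ``protruding length $1-2\rho$'' produces $f(r)$ does not match the geometry: the shadow's angular width is not proportional to protruding length (it vanishes as $\delta\to0$ while the protruding length stays near $1-2\rho$).

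The paper does not slice by circles here; the cross-section idea appears only later, in Section~3, to sharpen the estimate on the \emph{inner} part $E\cap B_r$. For this theorem it follows Cunningham and runs a Vitali-type selection directly on the triangles: greedily choose $\alpha_1,\alpha_2,\dots$ of near-maximal height $\delta_n$, surrounding each by a direction-interval $I_n$ of half-width $2\arcsin\bigl(\delta_n/((1-\varepsilon)r)\bigr)$. Two geometric lemmas drive the bound: (i) a disjointness lemma---if $|\alpha-\beta|\ge\arcsin(\delta_\alpha/r)+\arcsin(\delta_\beta/r)$ then $\mathring\Delta_\alpha^{\mathrm{ext}}\cap\mathring\Delta_\beta^{\mathrm{ext}}=\varnothing$---so the selected exteriors $\Delta_n^{\mathrm{ext}}$ are pairwise disjoint; and (ii) the single-triangle estimate $|\Delta^{\mathrm{ext}}|\ge f(r)\arcsin(\delta/r)$, obtained by minimizing over the isosceles configuration (this is where $r\ge0.15$ enters). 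Summing gives $\mathcal L_2^*(E_A)\ge\sum_n|\Delta_n^{\mathrm{ext}}|\ge f(r)\sum_n\arcsin(\delta_n/r)\ge\tfrac{f(r)}{4}\sum_n|I_n|\ge\tfrac{f(r)}{4}\mathcal L_1^*(A)$; the constant $\tfrac14$ is exactly the dilation ratio $|I_n|\big/\arcsin(\delta_n/r)\approx4$, not the output of an incidence count. If the selection never terminates, then $\delta_n\to0$ and the residual directions carry needles through $O$; their contribution is picked up \emph{inside} $B_r$ via the polar-measure lemma, which is precisely how the degeneration at $\delta=0$ is handled.
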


Cunningham’s lower bound in \cite{Cunningham_1971}*{Theorem 2} corresponds to the special case \( A = [0, \pi) \) (thus \( \mathcal{L}_1^*(A) = \pi \)) and \( r = 1/6 \), in which Theorem \ref{the:phiareamin} yields the following
\begin{theorem}[\cite{Cunningham_1971}*{Theorem 2}]
\label{the:Cunningham's lowerbound}
    Every star-shaped Kakeya set $E$ satisfies
    \begin{equation*}
        |E|\geq\frac{\pi}{108}.
    \end{equation*}
\end{theorem}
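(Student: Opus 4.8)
The plan is to deduce this statement directly from Theorem~\ref{the:phiareamin} by specializing the direction set and the cutoff radius, so that the only genuinely new ingredient is the geometric containment that places the family $\{\Delta_\alpha\}$ inside $E$. First I would record that fact: because $E$ contains a unit segment in every direction and is star-shaped about some point $O$, the solid triangle $\Delta_\alpha$ spanned by $O$ together with the unit segment in direction $\alpha$ lies entirely in $E$, since the two edges emanating from $O$ and every chord between them are forced into $E$ by star-shapedness. Taking the union over all directions gives
\begin{equation*}
    E \supseteq \bigcup_{\alpha\in[0,\pi)} \Delta_\alpha,
\end{equation*}
and hence, by monotonicity of Lebesgue outer measure, $\mathcal{L}_2^*(E) \geq \mathcal{L}_2^*\bigl(\bigcup_{\alpha\in[0,\pi)}\Delta_\alpha\bigr)$.

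Next I would invoke Theorem~\ref{the:phiareamin} with $A = [0,\pi)$, so that $\mathcal{L}_1^*(A) = \pi$, and with cutoff radius $r = 1/6$. Since $1/6 \geq 0.15$, the hypothesis is met, and the theorem yields
\begin{equation*}
    \mathcal{L}_2^*(E) \;\geq\; \frac{\pi}{4}\, f\!\left(\tfrac{1}{6}\right),
    \qquad f(r) = \tfrac12 r(2r-1)^2.
\end{equation*}
A direct computation gives $f(1/6) = \tfrac12\cdot\tfrac16\cdot(\tfrac13-1)^2 = \tfrac{1}{27}$, whence $\mathcal{L}_2^*(E) \geq \pi/108$, which is the claim.

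The one point worth a sentence is why $r = 1/6$ is the right value to plug in. Since $f'(r) = \tfrac12(2r-1)(6r-1)$, the function $f$ increases on $(0.15,\,1/6)$, decreases on $(1/6,\,1/2)$, and vanishes at $r = 1/2$; thus on the admissible subinterval $[0.15,\,1/2]$ it attains its maximum exactly at $r = 1/6$. This is precisely the value that extracts the strongest possible bound from Theorem~\ref{the:phiareamin} when $A$ is all of $[0,\pi)$, so the match with Cunningham's $\pi/108$ is no coincidence. I do not anticipate any real obstacle here: the analytic content has been absorbed into Theorem~\ref{the:phiareamin}, and the remaining argument is a short verification. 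The genuine improvement promised in Theorem~\ref{the:mylowerbound} must instead come from applying the bi-parametrized estimate more cleverly—to subsets $A \subset [0,\pi)$ together with radii tailored to the cross-sections of $E$—rather than to the single global pair $(A,r) = ([0,\pi),\,1/6)$.
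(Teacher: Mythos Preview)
Your proposal is correct and is exactly the approach the paper takes: the paper presents Theorem~\ref{the:Cunningham's lowerbound} as the special case $A=[0,\pi)$, $r=1/6$ of Theorem~\ref{the:phiareamin}, and your added details (the containment $\bigcup_\alpha\Delta_\alpha\subset E$ via star-shapedness, the evaluation $f(1/6)=1/27$, and the observation that $f$ is maximized at $r=1/6$) are precisely the verifications the paper leaves implicit.
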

In comparison with Cunningham's proof of Theorem \ref{the:Cunningham's lowerbound}, Theorem \ref{the:phiareamin} allows one to vary both $A$ and $r$, thereby opening the possibility of optimizing over these parameters to improve the final lower bound.





To prove Theorem \ref{the:phiareamin}, we start by invoking a technical lemma from \cite{Cunningham_1971}. Without loss of generality, we will assume that $\delta(\Delta)\in [0, \frac{\mathcal{L}_1^*(A)}{2}f(r)]$, since otherwise $\delta(\Delta) >\frac{\mathcal{L}_1^*(A)}{2}f(r)$ would immediately give rise to a $\Delta$ of area $\frac12\delta(\Delta)>\frac{\mathcal{L}_1^*(A)}{4}f(r)$. Under this assumption, $\delta(\Delta)<r$ and so the angle $\arcsin(\delta(\Delta)/r)$ is well defined. 
\begin{figure}[!ht]
    \centering
   \begin{tikzpicture}[scale=7, line join=round, font=\normalsize]
        \coordinate (O) at (0,0);
        
        \pgfmathsetmacro{\angle}{90}      
        \pgfmathsetmacro{\halfbase}{0.5}  
        \pgfmathsetmacro{\height}{0.1}    
        
        \coordinate (H) at (\angle:\height);
        \coordinate (A) at ($(H) + ({\angle+90}:\halfbase)$);
        \coordinate (B) at ($(H) + ({\angle-90}:\halfbase)$);
        \coordinate (C) at ($(H) + ({\angle+90}:\halfbase+0.2)$);
        \coordinate (D) at ($(H) + ({\angle-90}:\halfbase-0.2)$);
        \coordinate (E) at ($(H) + ({\angle+90}:\halfbase+0.55)$);
        \coordinate (F) at ($(H) + ({\angle-90}:\halfbase-0.55)$);
        \filldraw[fill=gray!40, draw=black, thick] (A) -- (O) -- (B) -- cycle node[midway, above] {$l_\alpha$};
            
        \filldraw[pattern=north east lines, draw = black, densely dotted , thick] (E) --(O) -- (F) -- cycle ;
        \fill (O) circle (0.1pt) node[below] {$O$};
        \draw[thick] (O) circle (1/6);
        \node at (-20:0.25) {$S_{r}$};
    \end{tikzpicture}
    \caption{$|\Delta^{\text{\rm ext}}|$ is minimized when $\Delta$ is isosceles}
    \label{fig:triangle}
\end{figure}
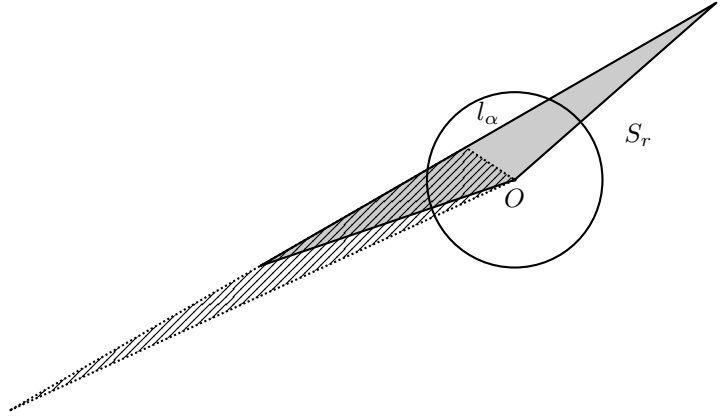

\begin{lemma}
Let $r\in[0.15,0.5]$, and let $\Delta,\Delta^{\text{\rm ext}}$ be as above. Then
\begin{equation}
    |\Delta^{\text{\rm ext}}| \geq f(r)\arcsin(\frac{\delta}{r}),
\end{equation}
where $\delta=\delta(\Delta)$ and f(r) is as in Theorem \ref{the:phiareamin}.
\label{lem:minofdelta'}
\end{lemma}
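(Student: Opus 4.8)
The plan is to reduce the estimate to the isosceles triangle, where $|\Delta^{\mathrm{ext}}|$ admits a closed form, and then to verify the resulting one-variable inequality in $\delta$. The hypothesis $r\geq 0.15$ will turn out to be exactly the condition under which the final (surprisingly tight) estimate survives.

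First I would place polar coordinates at $O$ with the polar axis along the perpendicular from $O$ to the line containing $l_\alpha$, letting $F$ be the foot of that perpendicular, so $|OF|=\delta$, and writing $a,b\geq 0$ with $a+b=1$ for the two pieces into which $F$ divides the unit base. A ray from $O$ at angle $\theta$ meets the base line at distance $\rho(\theta)=\delta/\cos\theta$, and it leaves $B_r$ exactly when $\rho(\theta)>r$, i.e. $|\theta|>\theta_0:=\arccos(\delta/r)$. Integrating $\tfrac12(\rho^2-r^2)$ over the two angular pieces $\theta_0\leq\theta\leq\arctan(a/\delta)$ and $\theta_0\leq-\theta\leq\arctan(b/\delta)$, and using $\int(\delta^2\sec^2\theta-r^2)\,d\theta=\delta^2\tan\theta-r^2\theta$, gives, when both base endpoints lie outside $B_r$,
\begin{equation*}
|\Delta^{\mathrm{ext}}| = C(\delta,r)-\frac{r^2}{2}\Big(\arctan\tfrac{a}{\delta}+\arctan\tfrac{b}{\delta}\Big),\qquad C(\delta,r)=\frac{\delta}{2}-\delta\sqrt{r^2-\delta^2}+r^2\arccos\tfrac{\delta}{r}.
\end{equation*}
The only dependence on the shape of $\Delta$ is through the apex angle $\arctan(a/\delta)+\arctan(b/\delta)=\angle POQ$, entering with a negative sign. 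Since for fixed base length $1$ and height $\delta$ this apex angle is maximized precisely at $a=b=\tfrac12$ (its $a$-derivative vanishes only there, and the endpoints $a\in\{0,1\}$ give smaller values), $|\Delta^{\mathrm{ext}}|$ is minimized at the isosceles triangle. Configurations with a base endpoint inside $B_r$ only enlarge $|\Delta^{\mathrm{ext}}|$ and can be dismissed separately, while in the isosceles case both endpoints lie outside $B_r$ whenever $r<\tfrac12$. This confirms the figure's claim and reduces the lemma to the isosceles case, since the right-hand side $f(r)\arcsin(\delta/r)$ is independent of the shape of $\Delta$.

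Setting $a=b=\tfrac12$ yields the closed form
\begin{equation*}
|\Delta^{\mathrm{ext}}|_{\mathrm{iso}}=\frac{\delta}{2}-\delta\sqrt{r^2-\delta^2}+r^2\arccos\!\Big(\frac{\delta}{r}\Big)-r^2\arctan\!\Big(\frac{1}{2\delta}\Big),
\end{equation*}
so it remains to show $\psi(\delta):=|\Delta^{\mathrm{ext}}|_{\mathrm{iso}}-f(r)\arcsin(\delta/r)\geq 0$ on the relevant range $\delta\in[0,\tfrac{\pi}{8}f(r)]$. A direct computation gives $\psi(0)=0$, and after differentiating, the two square-root contributions collapse to $-2\sqrt{r^2-\delta^2}$, leaving
\begin{equation*}
\psi'(\delta)=\frac12+\frac{2r^2}{4\delta^2+1}-2\sqrt{r^2-\delta^2}-\frac{r(2r-1)^2}{2\sqrt{r^2-\delta^2}},
\end{equation*}
and substituting $f(r)=\tfrac12 r(2r-1)^2$ one checks $\psi'(0)=0$ as well. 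Thus the inequality is tight to first order at $\delta=0$, and I would prove $\psi\geq 0$ by establishing convexity, $\psi''\geq 0$. Writing $\psi''(\delta)=\delta\,B(\delta)$ with
\begin{equation*}
B(\delta)=-\frac{16r^2}{(4\delta^2+1)^2}+\frac{2(r^2-\delta^2)-\tfrac12 r(2r-1)^2}{(r^2-\delta^2)^{3/2}},
\end{equation*}
it suffices to show $B(\delta)\geq 0$ on the (very short) interval. At $\delta=0$ this is equivalent to $32r^4+4r^2-8r+1\leq 0$, which holds precisely for $r$ above a threshold near $0.14$; the hypothesis $r\geq 0.15$ guarantees $B(0)>0$ with room to spare.

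The main obstacle is exactly this tightness: because $\psi$, $\psi'$ and even $\psi''$ vanish at the origin, no crude estimate works, and the admissible range of $r$ is pinned down by the sign of $B(0)$. To finish I would exploit that $\delta$ is confined to $[0,\tfrac{\pi}{8}f(r)]$, on which $\delta/r\leq\tfrac{\pi}{16}(2r-1)^2$ is small in the regime $r<\tfrac12$ of interest; this keeps $4\delta^2+1$ and $r^2-\delta^2$ within a controllable, explicitly bounded neighborhood of $1$ and $r^2$, so that $B(\delta)$ stays close to $B(0)$ and remains positive for all $r\geq 0.15$. With $\psi''\geq 0$ together with $\psi(0)=\psi'(0)=0$, monotonicity gives $\psi'\geq 0$ and then $\psi\geq 0$, proving the lemma. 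The only parts needing genuine care beyond bookkeeping are this uniform lower bound on $B(\delta)$ and the separate, easy check that off-center or endpoint-inside configurations cannot undercut the isosceles area.
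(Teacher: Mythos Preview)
Your proof is correct and follows the same two-step strategy as the paper: reduce to the isosceles triangle, then verify the resulting one-variable inequality in $\delta$. The reduction step is argued slightly differently---the paper uses the decomposition $|\Delta^{\mathrm{ext}}|=\tfrac12 l'_\alpha\,\delta-\tfrac12|\Delta\cap S_r|\,r$ and notes that $l'_\alpha$ is minimized while $|\Delta\cap S_r|$ is maximized at the isosceles configuration, whereas you obtain the same conclusion by polar integration and the apex-angle observation---but these are equivalent and yield the same closed form.

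The only substantive difference is in the one-variable step. The paper studies the ratio $h(\delta)=|\Delta^{\mathrm{ext}}|/\arcsin(\delta/r)$ and asserts (deferring the computation to Cunningham) that $h$ attains its minimum at $\delta=0$ once $r\gtrsim 0.146$; you instead study the difference $\psi(\delta)=|\Delta^{\mathrm{ext}}|-f(r)\arcsin(\delta/r)$ and show $\psi\geq 0$ via convexity, using $\psi(0)=\psi'(0)=0$ and $\psi''(\delta)=\delta\,B(\delta)\geq 0$. Your route is more explicit---you actually compute the derivatives and isolate the polynomial condition $32r^4+4r^2-8r+1\leq 0$ governing the threshold---but it leans on the a~priori restriction $\delta\leq\tfrac{\pi}{8}f(r)$ to keep $B(\delta)$ close to $B(0)$, whereas the paper's ratio statement holds on a larger $\delta$-range. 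Both arguments pin down the same threshold near $r\approx 0.14$--$0.15$ and are, at the level of detail given, sketches of the final verification rather than complete proofs.
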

\begin{proof}
    For the reader's convenience, we sketch the proof here. Denote $l_\alpha' = l_\alpha\cap B_r^c$. It suffices to show that $|\Delta^{\text{\rm ext}}|$ is minimized when $\Delta$ is isosceles (see Figure \ref{fig:triangle}). This is because by 
    $$
    |\Delta^{\text{\rm ext}}|= \frac{1}{2}l'_\alpha\cdot\delta-\frac{1}{2}|\Delta\cap S_r|r,
    $$ 
    $|l_\alpha'|$ is minimized, and $|\Delta\cap S_r|$ is maximized, when $\Delta$ is isosceles. 
    
    When $\Delta$ is isosceles, by simple trigonometry, we have
    \[
    |\Delta^{\text{\rm ext}}| = \frac{1}{2}\delta - \left( \delta\sqrt{r^2-\delta^2} + \left( \arcsin\frac{\delta}{r} - \arctan(2\delta) \right) r^2 \right).
    \]
To extract a lower bound, consider the function
$$
 h(\delta)=\frac{|\Delta^{\text{\rm ext}}|}{\arcsin{(\frac{\delta}{r})}}.
$$
Direct computation shows that, when $r\geq0.146\cdots$, the minimum of $h(\delta)$ is attained at $\delta=0$.
In particular, when $r\geq0.15$, we have
\begin{align*}
    h(\delta)=\frac{|\Delta^{\text{\rm ext}}|}{\arcsin{(\frac{\delta}{r})}}&\geq
    \lim_{\delta\to0} \frac{|\Delta^{\text{\rm ext}}|}{\arcsin{(\frac{\delta}{r})}}
    = \frac12r(2r-1)^2.
\end{align*}
This yields $|\Delta^{\text{\rm ext}}|\geq f(r)\arcsin(\frac{\delta}{r})$, as desired. 
\end{proof}
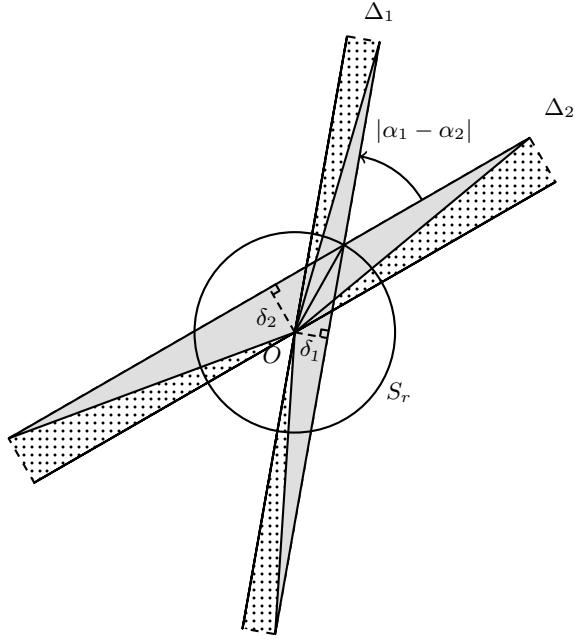
\begin{figure}[!ht]
    \centering
       \begin{tikzpicture}[scale=8.5, line join=round, font=\large]
        \coordinate (O) at (0,0);
        
        
        \node at (-30:0.20) {$S_r$};
        
        \def\angleDiff{-60}      
        \def\deltaOne{0.055}      
        \def\deltaTwo{0.085}      
        \def\angle{70}
        \coordinate (P1) at (\angleDiff:\deltaOne);
        \coordinate (P2) at (\angle:\deltaTwo);
        
        \coordinate (A1) at ($(P1)+({\angleDiff+90}:0.5)$);
        \coordinate (B1) at ($(P1)+({\angleDiff-90}:0.5)$);
        \coordinate (A2) at ($(P2)+(\angle+90:0.5)$);
        \coordinate (B2) at ($(P2)+(\angle-90:0.5)$);
        \coordinate (C1) at ($({\angleDiff+90}:0.5)$);
        \coordinate (D1) at ($({\angleDiff-90}:0.5)$);
        \coordinate (C2) at ($(\angle+90:0.5)$);
        \coordinate (D2) at ($(\angle-90:0.5)$);
        
        \fill[pattern=dots, draw = black, dashed ,thick] (A1)--(B1)--(D1)--(C1)--cycle;
        \fill[pattern=dots, draw = black, dashed ,thick] (A2)--(B2)--(D2)--(C2)--cycle;
        \fill[gray!40] (A1) -- (O) -- (B1) -- cycle;
        \fill[gray!40] (A2) -- (O) -- (B2) -- cycle;
        \draw[thick] (O) circle (1/6);
        \draw[black, thick] (A1) -- (O) -- (B1) -- cycle;
        \draw[black, thick] (A2) -- (O) -- (B2) -- cycle;
        \draw[black, thick] (C1) -- (O) -- (D1) -- cycle;
        \draw[black, thick] (C2) -- (O) -- (D2) -- cycle;
    
        \draw[densely dashed, thick] (O) -- (P1) node[midway, shift={(-0.18,-0.12)}] {$\delta_1$};
        \draw[densely dashed, thick] (O) -- (P2) node[midway, shift={(-0.24,0.1)}] {$\delta_2$};
        \pic [draw, angle radius=1mm, angle eccentricity=1.55, thick,
                  "{}", font=\large] {right angle = B2--P2--O};
        \pic [draw, angle radius=1mm, angle eccentricity=1.55, thick,
                  "{}", font=\large] {right angle = A1--P1--O};
        \path[name path=line1] (B1) -- (A1);         
        \path[name path=line2] (B2) -- (A2);      
    
        \path[name intersections={of=line1 and line2, by={P}}];
        \pic [draw, thick,->,angle radius=12mm, angle eccentricity=2,
                "{$|\alpha_1 - \alpha_2|$}"] {angle = B2--P--A1};
        \draw[black, thick] (P)--(O);
        \fill (O) circle (0.pt) node[shift={(-0.45,-0.05)}] {$O$};
        \node at ($(B2)+(0.05,0.0)$) {$\Delta_2$};
        \node at ($(A1)+(0.05,0.0)$) {$\Delta_1$};
    \end{tikzpicture}
    \caption{When $|\alpha_1 - \alpha_2| \geq \arcsin (\delta_1 / r)  + \arcsin (\delta_2 / r)$, $\Delta_{\alpha_1}^{\text{\rm ext}}$ and $\Delta_{\alpha_2}^{\text{\rm ext}}$ are disjoint.} 
    \label{fig:differanlge}
\end{figure}
\begin{lemma}
    \label{lem:sepreate}
    Let $r\in[0,1/2]$ and let $l_{\alpha_1}$, $l_{\alpha_2}$ be two needles. If $|\alpha_1 - \alpha_2| \geq \arcsin (\delta_1 / r)  + \arcsin (\delta_2 / r)$, then $ \mathring\Delta_{\alpha_1}^{\text{\rm ext}} \cap \mathring\Delta_{\alpha_2}^{\text{\rm ext}} = \varnothing$, where $\delta_1=\delta(\Delta_1), \delta_2=\delta(\Delta_2)$.
\end{lemma}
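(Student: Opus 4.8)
The plan is to reduce the claim to a purely angular statement by showing that each exterior part $\Delta_\alpha^{\text{\rm ext}}$ is trapped inside a thin double cone emanating from $O$. Precisely, writing $\eta_\alpha := \arcsin(\delta_\alpha/r)$, I would first prove the containment
$$
\Delta_\alpha^{\text{\rm ext}} \subseteq C_\alpha,
$$
where $C_\alpha$ is the closed double cone (a ``bowtie'') with apex $O$, axis the line through $O$ parallel to $l_\alpha$, and half-opening-angle $\eta_\alpha$; that is, $C_\alpha$ is the set of points $P$ for which the unoriented angle between the ray $OP$ and the direction $\alpha$ is at most $\eta_\alpha$, together with $O$ itself. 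Once this is in hand, the hypothesis on $|\alpha_1-\alpha_2|$ will force the two bowties to meet only along their common boundary, and the conclusion will follow.

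For the containment I would argue pointwise, so that no isosceles or single-wing assumption is needed. Take any $P \in \Delta_\alpha^{\text{\rm ext}}$, so $P \in \Delta_\alpha$ and $|OP| \geq r$; since $|OP|>0$ we have $P \neq O$. Writing $P$ as a convex combination of $O,A,B$ (the apex and the two endpoints of $l_\alpha$), the ray $OP$ meets the base line through $A,B$ at a point $Q$ with $P$ on the segment $OQ$, hence $|OP| \leq |OQ|$. Letting $\psi \in (0,\pi/2]$ be the angle between $OQ$ and the base direction $\alpha$, and dropping the perpendicular from $O$ to the base line (of length $\delta_\alpha$), the resulting right triangle gives $|OQ| = \delta_\alpha/\sin\psi$. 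Therefore $r \leq |OP| \leq \delta_\alpha/\sin\psi$, so $\sin\psi \leq \delta_\alpha/r$ and thus $\psi \leq \arcsin(\delta_\alpha/r)=\eta_\alpha$, which is exactly $P \in C_\alpha$.

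To finish, I would pass to the circle of directions modulo $\pi$ (of circumference $\pi$), on which $C_{\alpha_i}$ corresponds to the closed arc of radius $\eta_i$ centred at $\alpha_i$. The hypothesis says the centres are at distance $d := |\alpha_1-\alpha_2|\ (\text{\rm mod}\ \pi) \leq \pi/2$ with $d \geq \eta_1+\eta_2$; note this already forces $\eta_1+\eta_2 \leq \pi/2$, so the two arcs are separated by a nonnegative gap on \emph{both} sides of the circle, the far-side gap being $\pi - d - \eta_1 - \eta_2 \geq \pi - 2d \geq 0$. Hence $\operatorname{int}(C_{\alpha_1})$ and $\operatorname{int}(C_{\alpha_2})$ are disjoint. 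Since each $\mathring\Delta_{\alpha_i}^{\text{\rm ext}}$ is open and contained in $C_{\alpha_i}$, it lies in $\operatorname{int}(C_{\alpha_i})$, and disjointness of the cone interiors yields $\mathring\Delta_{\alpha_1}^{\text{\rm ext}} \cap \mathring\Delta_{\alpha_2}^{\text{\rm ext}} = \varnothing$.

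I expect the only delicate points to be bookkeeping rather than conceptual. First, one must get the relation $|OQ| = \delta_\alpha/\sin\psi$ right, in particular that it is $\sin$ and not $\cos$, so the bound reads $\arcsin(\delta_\alpha/r)$; this hinges on measuring $\psi$ from the base direction rather than from its perpendicular. Second, one must read ``$(\text{\rm mod}\ \pi)$'' as the genuine line-angle in $[0,\pi/2]$ and check the far side of the direction circle, since otherwise near-parallel needles would spuriously satisfy the hypothesis and the statement would fail; the observation $\eta_1+\eta_2 \leq d \leq \pi/2$ is what closes this gap.
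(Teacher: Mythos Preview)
Your argument is correct and rests on the same geometric fact the paper uses: the exterior part $\Delta_\alpha^{\text{\rm ext}}$ occupies an angular window of half-width $\arcsin(\delta_\alpha/r)$ about the direction $\alpha$, so two such windows separate once the directions are at least $\arcsin(\delta_1/r)+\arcsin(\delta_2/r)$ apart. The paper phrases this via an enclosing rectangular strip (the slab between $l_\alpha$ and the parallel line through $O$) and appeals to the figure to see that the strip's portion outside $B_r$ subtends exactly that angle; you instead prove the containment $\Delta_\alpha^{\text{\rm ext}}\subseteq C_\alpha$ pointwise with the one-line computation $\sin\psi = \delta_\alpha/|OQ|\leq \delta_\alpha/|OP|\leq \delta_\alpha/r$. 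Your route is a bit more self-contained (no strip intermediary, no figure needed) and your explicit check of the far-side gap on the direction circle, using $\eta_1+\eta_2\leq d\leq \pi/2$, fills in a point the paper leaves implicit; but the two arguments are really the same idea in different clothing.
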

\begin{proof}
Fix $r$, $\alpha_1$ and $\alpha_2$, and write $\Delta_i$ $(i = 1,2)$ in place of $\Delta_{\alpha_i}$. The shaded rectangles (see Figure \ref{fig:differanlge}) represent the union of triangles formed by $\Delta_1,\Delta_2$. When $|\alpha_1 - \alpha_2| > \arcsin(\delta_1 / r) + \arcsin(\delta_2 / r)$, the shaded rectanges are disjoint outside $B_r$. When $|\alpha_1-\alpha_2|=\arcsin(\delta_1 / r) + \arcsin(\delta_2 / r)$, the shaded rectangles intersect at a single point on $S_r$, as illustrated in Figure \ref{fig:differanlge}. 
\end{proof}

\begin{proof}[Proof of Theorem \ref{the:phiareamin}.]
Let $A$ be a subset of $[0,\pi)$ of outer measure $\mathcal{L}_1^*(A)$. Fix $r\geq0.15$ and fix a small $\varepsilon>0$. By the Kakeya property, we can associate each $\alpha$ with a needle $l_\alpha \subset E.$ Denote $\delta(\alpha)=\delta(\Delta_\alpha)$. For fixed $r$ and $\varepsilon$, define intervals:
\[
I_\alpha = \left( \alpha - 2\arcsin\left(\frac{1}{1-\varepsilon}\frac{\delta(\alpha)}{r}\right),\  \alpha + 2\arcsin\left(\frac{1}{1-\varepsilon}\frac{\delta(\alpha)}{r}\right) \right) \pmod{\pi}.
\]
(If $\delta(\Delta_\alpha) =0$, set $I_\alpha =\varnothing$.) We now select a sequence $\{\alpha_n\}$ of $\alpha$. For simplicity, we will denote $I_{\alpha_n}$ by $I_n$, $\Delta_{\alpha_n}$ by $\Delta_n$, and $\delta(\alpha_n)$ by $\delta_n$. Set
\begin{equation}
    \label{eq:E_A}
    E_{A}= \bigcup_{\alpha\in A}\Delta_\alpha\subset E.
\end{equation}
The sequence $\{\alpha_n\}$ (and the associated needles $\{l_{\alpha_n}\}$) is selected as follows:
\begin{enumerate}
    \item If $A\neq\varnothing$, choose $\alpha_1 \in A$ with $\delta_1 > (1-\varepsilon) \sup_{\alpha \in A} \delta(\alpha)$;
    \item For $k\geq2$, if $A\setminus\cup_{n=1}^{k-1}I_n \neq \varnothing$, choose $\alpha_k \in A\setminus\cup_{n=1}^{k-1} I_n$ with
    $$
    \delta_k > (1-\varepsilon) \sup_{\alpha \in A\setminus\cup_{n=1}^{k-1} I_n} \delta(\alpha);
    $$
    \item Continue this selection process unless $A\setminus \cup_{n=1}^{k-1} I_n =\varnothing$.
\end{enumerate}

Notice that for any $\alpha\notin I_1$, since $\delta_1>(1-\varepsilon)\delta(\alpha)$, we have
\begin{equation*}
    2\arcsin(\frac{1}{1-\varepsilon}\frac{\delta_1}{r})>\arcsin(\frac{\delta_1}{r})+\arcsin(\frac{\delta(\alpha)}{r}).
\end{equation*}
Therefore, by Lemma \ref{lem:sepreate}, $\Delta_\alpha^{\text{\rm ext}}$ and $\Delta_1^{\text{\rm ext}}$ are disjoint. Repeating the argument, it is easy to see that $\{\Delta^{\text{\rm ext}}_n\}_{n\geq1}$ are pairwise disjoint.

Two cases arise in the selection process:

\textbf{Case 1: } The selection process terminates in finite steps. In this case, there exists a finite integer $k\geq1$ such that $A \subset \bigcup_{n=1}^k I_n$. Thus, since $\bigcup_n\Delta_n^{\text{\rm ext}}\subset E_A$, we have
\begin{alignat}{2}
    \mathcal{L}_2^*(E_A) &\geq \sum_{n=1}^k|\Delta_n^{\text{\rm ext}}|
    && \text{(by Lemma \ref{lem:sepreate})} \label{eq:geqchain} \\
    &\geq \sum_{n=1}^k f(r)\arcsin\left(\frac{\delta_n}{r}\right)
    && \text{(by Lemma \ref{lem:minofdelta'})} \notag \\
    &\geq \frac{2}{\pi}\frac{\arcsin(1-\varepsilon)}{4(1-\varepsilon)} f(r) \sum_{n=1}^k 4(1-\varepsilon) && \arcsin\left(\frac{1}{1-\varepsilon}\frac{\delta_n}{r}\right) \notag \\
    &= \frac{2}{\pi}\frac{\arcsin(1-\varepsilon)}{4(1-\varepsilon)} f(r) \sum_{n=1}^k|I_n|
    && \text{(by the definition of }I_n) \notag \\
    &\geq \frac{2}{\pi}\frac{\arcsin(1-\varepsilon)}{4(1-\varepsilon)} f(r) \mathcal{L}_1^*(A)
    && (\text{since } \mathcal{L}_1^*(A) \leq \left|\bigcup_{n=1}^k I_n\right|), \notag
\end{alignat}
where in the third inequality, we have used $\frac{\arcsin(mx)}{\arcsin(x)}\geq \frac{2}{\pi}\arcsin(m)$, when $x, m \in (0,1]$. Taking $\varepsilon \to 0$, we obtain:
\begin{equation}
     \mathcal{L}_2^*(E_A) \geq \frac{\mathcal{L}_1^*(A)}{4}f(r).
\end{equation}
This shows that (\ref{eq:phiareamin}) holds in Case 1.

\textbf{Case 2:} The selection process does not terminate. In this case, since $E_A\cap B_r^c \supset \bigcup_n\Delta_n^{\text{\rm ext}}$, denoting $C_\varepsilon = \frac{2}{\pi}\frac{\arcsin(1-\varepsilon)}{1-\varepsilon}$ and noting that $C_\varepsilon\to1$ as $\varepsilon\to0$, we have 
\begin{align}
    \mathcal{L}_2^*(E_A\cap B_r^c) &\geq\sum_{n=1}^\infty |\Delta_n^{\text{\rm ext}}| 
    && \label{eq:EacapBr} \\ 
    &\geq \frac{C_\varepsilon}{4}f(r)\sum_{n=1}^\infty4(1-\varepsilon)\arcsin\left(\frac{1}{1-\varepsilon}\frac{\delta_n}{r}\right)
    && (\text{similar to \eqref{eq:geqchain}}). \label{eq:b}
\end{align}
Denote the last series by  
$$
b(r) = \sum_{n=1}^\infty4(1-\varepsilon)\arcsin\left(\frac{1}{1-\varepsilon}\frac{\delta_n}{r}\right).
$$
For simplicity, we will simply write $b$ for $b(r)$ below. Then we have
\begin{align}
    |\bigcup_n I_n|\leq b,
    \label{eq:cupIn}
\end{align}
since $|\cup_n I_n|\leq\sum_n |I_n|$. Therefore, by (\ref{eq:cupIn}), 
\begin{equation}
    \mathcal{L}_1^*(A \setminus \bigcup_{n=1}^\infty I_n) \geq \mathcal{L}_1^*(A) -  b.\label{eq:anlgeofdeltavanished}
\end{equation}
Now consider two subcases: 
    
\textbf{Subcase 2a:} $b = \infty$.
In this case, the desired bound (\ref{eq:phiareamin}) follows immediately, since (\ref{eq:EacapBr}) and (\ref{eq:b}) together imply $\mathcal{L}_2^*(E_A)=\infty$.

\textbf{Subcase 2b:} $b < \infty$.
In this case, the convergence of the series in (\ref{eq:b}) implies $\delta_k \to 0$. Since $(1-\varepsilon)\sup_{\alpha \in A \setminus (\cup_n I_n)} \delta(\alpha) \leq \delta_k$ by the selection process, we have $\sup_{\alpha \in A \setminus (\cup_n I_n)}\delta(\alpha) = 0$. Thus the set $E_A$ contains a needle passing through $O$ in every direction $\alpha\in A \setminus \bigcup_{n=1}^\infty I_n=:A_{0}$. By (\ref{eq:anlgeofdeltavanished}) and Lemma \ref{lem:polmeasure}, the union of these needles satisfies 
\begin{equation}
    \label{eq:linemeasure}
    \mathcal{L}_2^*(B_r\cap\bigcup_{\alpha\in A_0}l_\alpha)\geq\frac{1}{2} r^2 \left( \mathcal{L}_1^*(A) -  b \right).
\end{equation}
Since $\{\Delta_n^{\text{\rm ext}}\}$ are disjoint, we have
\begin{align*}
\mathcal{L}_2^*(E_A) &= \mathcal{L}_2^*( E_A\cap B_r) + \mathcal{L}_2^*( E_A\cap B_r^c)\notag\\
&\geq \frac{1}{2} r^2 \left( \mathcal{L}_1^*(A) -  b \right) + \frac{C_\varepsilon}{4}f(r)b \\
&= \frac{\mathcal{L}_1^*(A)}{2} r^2 + \left(\frac{C_\varepsilon}{4}f(r)-\frac{1}{2}r^2\right) b.
\end{align*}
Since 
$$
\frac{C_\varepsilon}{4}f(r)-\frac{1}{2}r^2 \leq \frac{1}{4}f(r)-\frac{1}{2}r^2
$$ 
and $f(r) - {2r^2} \leq 0$ when $r\geq1-\frac{\sqrt{3}}{2}$, and since $\mathcal{L}_2^*(E_A)\geq \frac{C_\varepsilon}{4}f(r)b$ by \eqref{eq:b}, we obtain:
\begin{align*}
    \mathcal{L}_2^*(E_A) \geq \frac{\mathcal{L}_1^*(A)}{2} r^2 + \left(1 -\frac{2r^2}{C_\varepsilon f(r)}\right) \mathcal{L}_2^*(E_A).
\end{align*}
Taking $\varepsilon\to0$, this implies $\mathcal{L}_2^*(E_A) \geq \frac{\mathcal{L}_1^*(A)}{4}f(r)$.
This completes the proof of Theorem \ref{the:phiareamin}.
\end{proof}
The function $f(r)$ in Theorem \ref{the:phiareamin} is maximized at $r=1/6$, which gives Cunningham's lower bound when $A=[0, \pi)$. In order to improve Cunningham's lower bound, the key departure is to improve the lower bound for $|E_A\cap B_r|$. More precisely, we have: 
\begin{lemma}
    \label{lem:inplusout}   
    Let $A\subset[0,\pi)$ and let $r\geq0.15$. If
    \begin{equation}
        \mathcal{L}_2^*(E_A\cap B_r) \geq s_0,
    \end{equation}
    then
    \begin{equation}
    \label{eq:inplusout}
        \mathcal{L}_2^*(E_A) \geq \frac{\mathcal{L}_1^*(A)}{4}f(r) + \left(1 - \frac{f(r)}{2r^2}\right) s_0,
    \end{equation}
    where $E_A$ is as in (\ref{eq:E_A}).
\end{lemma}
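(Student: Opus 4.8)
The plan is to reuse the entire selection machinery from the proof of Theorem~\ref{the:phiareamin} and to retain the one piece of information that was discarded at its very last step. Fix a small $\varepsilon>0$ and run the identical greedy selection, producing the sequence $\{\alpha_n\}$, the intervals $\{I_n\}$, the pairwise disjoint exterior triangles $\{\Delta_n^{\mathrm{ext}}\}$, and the quantity $b=b(r)$ of~\eqref{eq:b}. The computation carried out there (the Subcase~2b display) already gives the intermediate bound
\[
  \mathcal{L}_2^*(E_A)\ \geq\ \tfrac{1}{2}r^2\,\mathcal{L}_1^*(A)+\Bigl(1-\tfrac{2r^2}{f(r)}\Bigr)b ,
\]
and in the remaining branches of the dichotomy this same inequality holds for free: in Subcase~2a its right-hand side is $-\infty$ while $\mathcal{L}_2^*(E_A)=\infty$, and in Case~1 one has $\mathcal{L}_1^*(A)\le\frac{4}{f(r)}b$ in the limit $\varepsilon\to0$, which forces the right-hand side to be $\le b\le\mathcal{L}_2^*(E_A)$. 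This is the only consequence of Theorem~\ref{the:phiareamin}'s proof I will use; call it $(\mathrm{II})$.

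Next I would record a second, genuinely stronger bound that feeds in the hypothesis. Because $B_r$ is measurable, Carath\'eodory's criterion gives $\mathcal{L}_2^*(E_A)=\mathcal{L}_2^*(E_A\cap B_r)+\mathcal{L}_2^*(E_A\cap B_r^c)$. The first summand is $\ge a_0$ by hypothesis, and the second is $\ge\sum_n|\Delta_n^{\mathrm{ext}}|\ge b$, since the $\Delta_n^{\mathrm{ext}}$ are disjoint subsets of $E_A\cap B_r^c$ and Lemma~\ref{lem:minofdelta'} bounds their total area below by $b$ (after the usual $\arcsin(x)/\arcsin(mx)\ge 1/m$ step). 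This yields
\[
  \mathcal{L}_2^*(E_A)\ \geq\ a_0+b ,
\]
call it $(\mathrm{I})$. This is precisely where the argument departs from Theorem~\ref{the:phiareamin}, whose final line used only the weaker estimate $\mathcal{L}_2^*(E_A)\ge b$, i.e.\ the case $a_0=0$.

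It then remains to eliminate $b$ between $(\mathrm{I})$ and $(\mathrm{II})$. Writing $M=\mathcal{L}_2^*(E_A)$ and $c=1-\tfrac{2r^2}{f(r)}$, bound $(\mathrm{I})$ reads $b\le M-a_0$, and since $c\le0$ for $r\ge1-\tfrac{\sqrt3}{2}$ (in particular for $r\ge0.15$) multiplying by $c$ reverses the inequality to $cb\ge c(M-a_0)$. Substituting into $(\mathrm{II})$ gives $M\ge\tfrac12 r^2\mathcal{L}_1^*(A)+c(M-a_0)$, and solving the resulting linear inequality with $1-c=\tfrac{2r^2}{f(r)}$ produces
\[
  M\ \geq\ \frac{f(r)}{4}\,\mathcal{L}_1^*(A)+\Bigl(1-\frac{f(r)}{2r^2}\Bigr)a_0 ,
\]
which is~\eqref{eq:inplusout}; the case $b=\infty$ is immediate since then $M=\infty$, and the $\varepsilon\to0$ limit is harmless as the only $\varepsilon$-dependence sits in $b$ and in the widths $|I_n|$, handled exactly as in Theorem~\ref{the:phiareamin}.

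The work is essentially bookkeeping, so the only points genuinely requiring care are the two I flagged: verifying that $(\mathrm{II})$ persists across all three branches of the selection dichotomy (not merely Subcase~2b, where it is written down), and checking the sign $c\le0$, which is exactly what makes the substitution $cb\ge c(M-a_0)$ run in the favorable direction and what pins down the restriction on $r$. Once these are in hand the conclusion is forced, and as a consistency check setting $a_0=0$ recovers Theorem~\ref{the:phiareamin} verbatim.
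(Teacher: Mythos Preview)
Your proof is correct and follows essentially the same approach as the paper: both arguments extract from the selection machinery of Theorem~\ref{the:phiareamin} the pair of bounds $M\ge a_0+b$ and $M\ge \tfrac12 r^2\mathcal{L}_1^*(A)+\bigl(1-\tfrac{2r^2}{f(r)}\bigr)b$, then eliminate $b$ using the sign condition $1-\tfrac{2r^2}{f(r)}\le 0$. The only cosmetic difference is that the paper performs the elimination by splitting into two cases according to whether $a_0$ exceeds $\tfrac12 r^2\bigl(\mathcal{L}_1^*(A)-\tfrac{4}{f(r)}b\bigr)$, whereas you do it in one line via $cb\ge c(M-a_0)$; the paper also handles the finite-termination branch directly (getting the stronger $M\ge \tfrac{\mathcal{L}_1^*(A)}{4}f(r)+a_0$) rather than folding it into $(\mathrm{II})$ as you do.
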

\begin{proof}
    We use the same notation as in the proof of Theorem \ref{the:phiareamin}. If $A$ is covered by finitely many intervals $I_n$, then $\mathcal{L}_2^*( E_A )= \mathcal{L}_2^*( E_A\cap B_r^c) + \mathcal{L}_2^*( E_A\cap B_r )\geq\frac{\mathcal{L}_1^*(A)}{4}f(r) + s_0$. Thus (\ref{eq:inplusout}) follows. Otherwise, two cases arise:

    \textbf{Case 1: }$s_0 \geq \frac{1}{2}r^2 \left( \mathcal{L}_1^*(A) - \frac{4}{f(r)} b \right)$.

    In this case, we have $\mathcal{L}_2^*( E_A\cap B_r^c)\geq b \geq \frac{\mathcal{L}_1^*(A)}{4}f(r) - \frac{f(r)}{2r^2}s_0$. Therefore,  
    \begin{align*}
        \mathcal{L}_2^*(E_A)&\geq b + s_0\\ 
        &\geq \frac{\mathcal{L}_1^*(A)}{4}f(r) + \left(1 - \frac{f(r)}{2r^2}\right) s_0.
    \end{align*}

    \textbf{Case 2: }$s_0 \leq \frac{1}{2}r^2 \left( \mathcal{L}_1^*(A) - \frac{4}{f(r)} b \right)$.

    In this case, we have $b \leq \frac{\mathcal{L}_1^*(A)}{4}f(r) - \frac{f(r)}{2r^2}s_0$. Therefore, 
    \begin{align*}
        \mathcal{L}_2^*(E_A)&\geq b + \frac{1}{2}r^2 \left( \mathcal{L}_1^*(A) - \frac{4}{f(r)} b \right) \\&\geq \frac{\mathcal{L}_1^*(A)}{4}f(r) + \left(1 - \frac{f(r)}{2r^2}\right) s_0.
    \end{align*}
    Combining these two cases, we complete the proof.
\end{proof}

\section{Improving the Lower Bound via cross-sections}
\label{sec:3}
Cunningham partitioned the star-shaped Kakeya set $E$ using a circle of radius $1/6$. In this section, we show that his estimate for the inner part $|E\cap B_r|$ can be improved using circular cross-sections. 

Using polar coordinates, the Lebesgue outer measure of $E$ satisfies the inequality:
\begin{equation}
    \mathcal{L}_2^*(E) \geq \int_{\mathbb{R}^+}^* \mathcal{H}_1^*(E \cap S_r)\,  dr,
    \label{eq:fubini}
\end{equation}
where $\mathbb{R}^+$ denotes the set $\{x\in\mathbb{R}\mid x\geq0\}$ (See Lemma \ref{lem:polar_fubini}.). Here the upper integral $\int^*$ is defined as:
\[
\int^* f \, dx= \inf \left\{ \int g \, dx: g \text{ is measurable and } g \geq f \right\}.
\]
By establishing a lower bound for $\mathcal{H}_1^*(E \cap S_r)$, we will use (\ref{eq:fubini}) to show that:
\begin{equation*}
\mathcal{L}_2^*(E) \geq \frac{\pi}{98}.
\end{equation*}
thus proving Theorem \ref{the:mylowerbound}.

Fix $r\in [0,1/2]$. As in the proof of Theorem \ref{the:phiareamin}, we may assume that every needle satisfies 
$$
\delta < \pi/49 =:a.
$$ 
Set $r_0 = 1/4 > a$. For a needle in direction $\alpha$, let $\Gamma_{\alpha,1}$ and $\Gamma_{\alpha,2}$ be the connected components of $\Delta_\alpha \cap S_r$ (each of which will be called an ``arc"), ordered in such a way that $|\Gamma_{\alpha,1}| \geq |\Gamma_{\alpha,2}|$ (with the convention that $\Gamma_{\alpha,2} = \varnothing$ if only one arc is formed; see Figure \ref{fig:arc}).

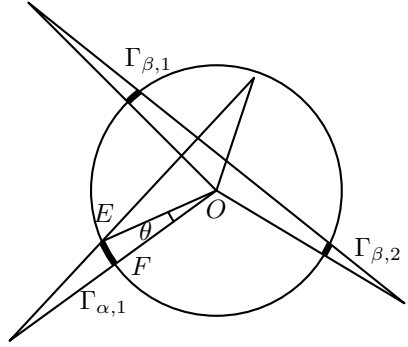
\begin{figure}[!ht]
    \centering
    \begin{tikzpicture}[scale=5.5, line cap=round, line join=round]
        \def\radius{1/3}
        \coordinate (O) at (0,0); 
    
        \draw[thick] (O) circle (\radius);
        \coordinate (A) at (0,0);
        \coordinate (B) at (-0.5,+0.5);
        \coordinate (C) at (0.5,-0.3);
        \draw[thick] (A) -- (B) -- (C) -- cycle;
    
        \coordinate (D) at (0,0);
        \coordinate (E) at (0.10,0.30);
        \coordinate (F) at (-0.55,-0.4);
        \coordinate (P1) at (-0.304499, -0.135615);
        \coordinate (P2) at (-0.269579, -0.196057);
        
        \draw[thick] (D) -- (E) -- (F) -- cycle;
        \draw[thick] (O)--(P1);
        \pic [draw, thick, angle radius=10mm, angle eccentricity=1.45,
      "{$\theta$}", font=\Large] {angle = P1--O--P2};
        \begin{scope}
            \clip (A) -- (B) -- (C) -- cycle;
            \draw[line width=2.5pt] (O) circle (\radius);
        \end{scope}

        \begin{scope}
            \clip (D) -- (E) -- (F) -- cycle;
            \draw[line width=2.5pt] (O) circle (\radius);
        \end{scope}

        \fill (O) circle (0.1pt) node[below] {$O$};
        \draw (-0.25,0.385) node [anchor=north west][inner sep=0.75pt]  [font=\normalsize,xscale=1,yscale=1]  {$\Gamma_{\beta,1}$};
        \draw (0.5,-0.2) node [anchor=south east][inner sep=0.75pt]  [font=\normalsize,xscale=1,yscale=1]  {$\Gamma_{\beta,2}$};
        \draw (-0.4,-0.3) node [anchor=north west][inner sep=0.75pt]  [font=\normalsize,xscale=1,yscale=1]  {$\Gamma_{\alpha,1}$};
        \node at (-0.35,-0.12) {$E$};
        \node at (-0.27,-0.25) {$F$};
    \end{tikzpicture}
    \caption{Arcs formed by triangle-circle intersection}
    \label{fig:arc}
\end{figure}

In order to establish a uniform lower bound for the inner part, we select arcs using a Vitali-type covering argument, as follows:
\begin{enumerate}
     \item[(1)] Set $\mathscr{A}_1 = \{\Gamma_{\alpha,i}\}$ $(i = 1,2)$. If $\mathscr{A}_1\ne\varnothing$, 
      choose $\widetilde{\Gamma}_1 \in \mathscr{A}_1$ satisfying \\ 
      $|\widetilde{\Gamma}_1| \geq (1 - \varepsilon) \sup_{\Gamma \in \mathscr{A}_1} |\Gamma|$;
    \item[(2)] For $k\geq2$, if $\mathscr{A}_k:= \{ \Gamma_{\alpha,i} \mid 
      \Gamma_{\alpha,i} \cap \bigcup_{j=1}^{k-1} \widetilde{\Gamma}_j = \varnothing \} \neq\varnothing$, 
      choose $\widetilde{\Gamma}_k \in \mathscr{A}_k$ satisfying 
      $|\widetilde{\Gamma}_k| \geq (1 - \varepsilon) \sup_{\Gamma \in \mathscr{A}_k} |\Gamma|$;

      \medskip
      \noindent
      if $\mathscr{A}_k=\varnothing$, then we have $$\bigcup_{\alpha,i}\Gamma_{\alpha,i}
      \subset \bigcup _{n=1}^{k-1}\bigl( \frac{2}{1-\varepsilon} + 1 \bigr) \widetilde{\Gamma}_n,
      $$
      since $\Gamma_{\alpha,i} \cap\bigcup _{n=1}^{k-1} \widetilde{\Gamma}_n \neq \varnothing$ and
      $|\widetilde{\Gamma}_n|\geq(1-\varepsilon)\sup_{\Gamma \in \mathscr{A}_n}|\Gamma|$,
      $n=1,\cdots, k-1$.
    \item[(3)] Continue this selection process unless $\mathscr{A}_k =\varnothing$.
\end{enumerate}
Note that the union of the arcs constituting $\Delta_\alpha \cap S_r$ satisfies:
\begin{equation}
    \label{eq:vitalitype}
    \bigcup_{\alpha,i} \Gamma_{\alpha,i} \subset \bigcup_k \left( \frac{2}{1-\varepsilon} + 1 \right) \widetilde{\Gamma}_k,
\end{equation}
where $\left( \frac{2}{1-\varepsilon} + 1 \right) \widetilde{\Gamma}_k$ is the arc with the same midpoint as $\widetilde{\Gamma}_k$ and is $\left( \frac{2}{1-\varepsilon} + 1 \right)$ times as long as $\widetilde{\Gamma}_k$.

Let $\Gamma =\wideparen{EF}$ be an arc in $S_r$. In what follows, we will call the angle $\theta = \angle EOF$ the \textit{central angle} of $\Gamma$ (see Figure \ref{fig:arc}). Slightly abusing notation, we will often identify $\theta$ with the corresponding set of angles $\theta\subset[0,2\pi)$ (In fact, we will show below that $|\theta|\leq\frac{\pi}{2}$); in particular, we have $|\Gamma| = r|\theta|$. With this notation, (\ref{eq:vitalitype}) implies:
\begin{equation}
    \label{eq:thetavitali}
    \bigcup_{\alpha,i}\theta_{\alpha,i}\subset\bigcup_k\left(\frac{2}{1-\varepsilon}+1\right)\widetilde{\theta}_k,
\end{equation}
where $\theta_{\alpha,i}$ and $\widetilde{\theta}_k$ are the central angles of $\Gamma_{\alpha,i}$ and $\widetilde{\Gamma}_k$, respectively. Similarly, $\left( \frac{2}{1-\varepsilon} + 1 \right)\widetilde{\theta}_k$ is the interval with the same center as $\theta_{\alpha,1}$ and is $\left( \frac{2}{1-\varepsilon} + 1 \right)$ times as long as $\widetilde{\theta}_k$.

Define $J_\Gamma = \{\alpha \mid \Gamma_{\alpha,1} \subset \Gamma\}$. Clearly, $\alpha \in J_{\Gamma_{\alpha,1}}$ and $[0,\pi) = \bigcup_{\alpha} J_{\Gamma_{\alpha,1}}$. 
If $J_{\Gamma_{\alpha,1}}\subset C\theta_{\alpha,1}$ for a suitable constant $C$, then:
\[
\pi = \left| \bigcup_{\alpha} J_{\Gamma_{\alpha,1}} \right| \leq \mathcal{L}_1^*\left(\bigcup_{\alpha,i} C\theta_{\alpha,i}\right)\leq C\left( \frac{2}{1-\varepsilon}+1 \right) \sum_k |\widetilde{\theta}_k|,
\]
where the last series satisfies 
$$
r\sum_k|\widetilde{\theta}_k|=\sum_k |\widetilde{\Gamma}_k| \leq |\bigcup_\alpha \Delta_\alpha \cap S_r|.
$$ 
However, without restrictions on needle position, this bound may fail. Our next objective is to find the critical condition under which this bound holds.

First, we consider the size of $ \theta $ for a fixed radius $ r $. As the needle moves towards infinity, $ |\theta| $ tends to 0. The value of $\theta$ attains its maximum when one endpoint of the needle lies on $S_r$ (see Figure \ref{fig:thetarange}). Thus the size of $ \theta $ satisfies:
\begin{align*}
    0\leq |\theta| \leq \arcsin\left(\frac{a}{r}\right) - \arctan\left(\frac{a}{\sqrt{r^2-a^2}+1}\right)\leq\frac{\pi}{2}.
\end{align*}
\begin{figure}[!ht]
    \centering
       \begin{tikzpicture}[scale=5, line join=round, font=\small]
        \coordinate (O) at (0,0);
        \coordinate (A) at (-10:1/3);
        \coordinate (B) at ($(A)+({20}:1)$);
        \coordinate (P) at (-60:1/6);
        \coordinate (B1) at ($(A)+({20}:0.6)$);
        \coordinate (A1) at ($(A)+({20}:-0.4)$);
        \coordinate (A2) at ($(P)+({20}:-0.5)$);
        \coordinate (B2) at ($(P)+({20}:0.5)$);
        \draw[|<->|, >=stealth, thick] ($(O)+(-100:0.04)$) -- node[fill=white] {$r$} ($(A)+(-100:0.04)$);
        \begin{scope}
            \clip (A) -- (O) -- (B) -- cycle;
            \draw[line width=2.5pt,gray] (O) circle (1/3);
        \end{scope}
        \node[font=\large] at (5:1/3+0.05) {$\Gamma$};
        \draw[thick] (O) circle (1/3);
        \draw[black, thick] (A) -- (O) -- (B) -- cycle;
        \draw[|<->|, >=stealth, thick] ($(A)+(-70:0.06)$) -- node[fill=white] {$1$} ($(B)+(-70:0.06)$);
        \pic [draw, thick, ->,>=stealth, angle radius=8mm, angle eccentricity=1.55, 
                  "{$\theta$}", font=\large] {angle = A--O--B};
        \fill (O) circle (0.1pt) node[shift={(-0.2,0.2)}] {$O$};
        \node at (-30:0.40) {$S_{r}$};
    \end{tikzpicture}
    \caption{$\theta$ is maximized for fixed $r$, $\delta$ and $\alpha$}
    \label{fig:thetarange}
\end{figure}

Consider a fixed arc \(\Gamma = \wideparen{EF} \). We examine the set \(J_\Gamma = \{\alpha \mid \Gamma_{\alpha,1} \subset \Gamma\}\) under two scenarios. We will assume that $ |\theta| \leq \frac{\pi}{2} - \arctan(2r) $ below, since for $ |\theta| \geq \frac{\pi}{2} - \arctan(2r) $, we have 
$$ 
\frac{|J_\Gamma|}{|\theta|} \leq \frac{\pi}{|\theta|} \leq \frac{\pi}{ \frac{\pi}{2} - \arctan(2r)}.
$$
In this case, we have $J_\Gamma\subset \frac{\pi}{ \frac{\pi}{2} - \arctan(2r)}\theta$.

\textbf{Case 1:} $ l_\alpha \cap B_r \neq \varnothing $. In this case, the critical directions $ \alpha $ for which $ \Gamma_{\alpha,1} = \Gamma $ hold are attained when $ \Delta_\alpha $ is isosceles (see Figure \ref{fig:l-alpha-in-Br}). There are exactly two such directions. Denote the larger one by $ \alpha_1 $, and the smaller one by $ \alpha_2 $. This is because the lines $ l_\alpha $ satisfying both $ \Delta_\alpha \cap B_r = \Gamma $ and $ l_\alpha \cap B_r \neq \varnothing $ pass through the point $ E $ and have one endpoint $ B' $ lying on the ray $ OF $. The longer the segment $ EB' $, the smaller the angle $ \alpha $ between $ EB' $ and $ EA $. Conversely, a shorter $ |EB'| $ results in a larger $ \alpha $. If $ |EB'| $ is smaller than the length shown in Figure \ref{fig:l-alpha-in-Br}, then $ \Gamma = \Gamma_{\alpha,2} $, meaning $ \Gamma_{\alpha,1} \nsubseteq \Gamma $, and thus $ \alpha \notin J_{\Gamma} $. Let $ \delta_0 $ be the distance from $ O $ to $ l_{\alpha_1} $. Then $ |\theta| = \arcsin(\frac{\delta_0}{r}) - \arctan(2\delta_0) $, and $ \alpha_1 = \arcsin(\frac{\delta_0}{r}) $. Therefore, for $ |\theta| \leq \frac{\pi}{2} - \arctan(2r) $, there is a one-to-one correspondence between $ \delta_0(r) $ and $ |\theta| $, implying that an isosceles triangle $ \Delta_\alpha $ can always be found.

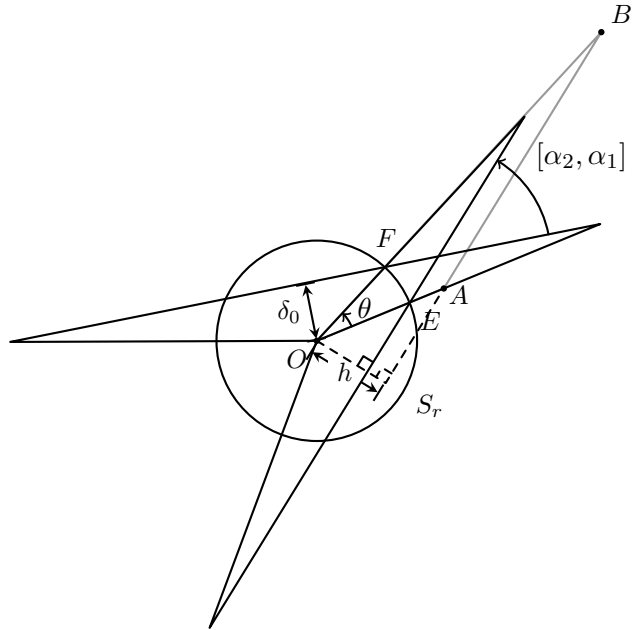
\begin{figure}[!ht]
    \centering
    \scalebox{1}{
        \begin{tikzpicture}[scale=4.5, line cap=round, line join=round]
            \coordinate (O) at (0,0);
            
            \def\r{1/3}
            \draw[thick] (O) circle (\r);
        
            \coordinate (F) at (1.01021,-0.13472);
            \coordinate (E) at (-0.88490,0.50622);
            \coordinate (I) at (-0.78589,-0.64890);
            \coordinate (H) at (0.97095,0.30079);
            \coordinate (A) at (1.33234,0.41613);
            \coordinate (G) at (0.45258,-0.06037);
            \coordinate (J) at (0.48870,0.04171);
            \coordinate (K) at (0.12797,-0.23622);
            \coordinate (L) at (0.09343,-0.17244);
            \coordinate (P) at (0.06285,0.18575);
            \coordinate (E') at (0.33015,-0.04555);
            \coordinate (F') at (0.31814,0.09941);
            
            \draw[|<->|, >=stealth, thick] ($(O)+(101.309645991-90:0)$) -- node[left] {$\delta_0$} ($(P)+(101.309645991-90:0)$);   
             \draw[|<->|, >=stealth, thick] ($(O)+(-31.5541271757-90:0.04)$) -- node[fill=white] {$h$} ($(K)+(-31.5541271757-90:0.04)$);      
              
            \draw[gray!60,black!40,thick] (A) -- (G) -- (O) -- cycle;
            \draw[thick] (E) -- (F) -- (O) -- cycle;
            \draw[thick] (H) -- (I) -- (O) -- cycle;
            \draw[thick,dashed] (O)--(L)--(K)--(G);
            \fill (O) circle (0.3pt) node[shift={(-0.4,0)}] {$O$};
            \fill (A) circle (0.3pt) node[above right] {$B$};
            \fill (G) circle (0.3pt);
            \fill (H) circle (0.3pt) node[above left] {$B'$};
            \fill (I) circle (0.3pt) node[below left] {$A'$};
            \fill (E') circle (0.3pt);
            \fill (F') circle (0.3pt);
            \pic [draw, thick, ->, angle radius=16mm, angle eccentricity=1.55, 
                  "{$|\alpha_1-\alpha_2|$}", font=\large] {angle = F--J--H};
            \pic [draw, thick, ->, angle radius=5mm, angle eccentricity=1.55, 
                  "{$\theta$}", font=\large] {angle = F--O--H};
            \pic [draw, thick, angle radius=1.5mm, angle eccentricity=1.55, 
                  "{}", font=\large] {right angle = H--L--O};
            \pic [draw, thick, dashed, angle radius=1.5mm, angle eccentricity=1.55, 
                  "{}", font=\large] {right angle = A--K--O};
           
            \node at ($(G)+(0.08,-0.05)$) {$A$};
            \node at (26:1/3+0.08) {$F$};
            \node at (-20:1/3+0.05) {$E$};
            \node at (-60:1/3+0.1) {$S_r$};
            \node at (-118:1/3+0.2) [font=\large] {$l_{\alpha_1}$};
            \node at (128:1/3+0.2) [font=\large] {$l_{\alpha_2}$};
        \end{tikzpicture}
    }
    \caption{The critical cases where $\alpha$ does not exceed $[\alpha_1,\alpha_2]$}
    \label{fig:l-alpha-in-Br}
\end{figure}

\textbf{Case 2:} $ l_\alpha \cap B_r = \varnothing $. In this case, the maximum angle $ \alpha $ for which $ \Gamma_{\alpha,1} = \Gamma $ occurs when $\delta(\Delta_\alpha)=a$. The endpoints $ A, B $ of $ l_\alpha $ must lie on the rays $ OE $ and $ OF $ respectively, with $ A, B \in B_r^c $. Without loss of generality, we may assume $OA$ is the $x-$axis. This case only exists if $ OA\geq r $. Consider a direction $ \beta $ such that $ l_\beta \cap B_r = \varnothing $ and $ \Delta_\beta \cap B_r = \Gamma $, as shown in Figure \ref{fig:l-alpha-in-Br}. A necessary and sufficient condition for $ \beta = \alpha_1 $ is that the distance $ h $ from $ O $ to $ l_\beta $ satisfies 
\begin{equation}
    h = \frac{\delta_0}{\frac{1}{2} - \sqrt{r^2 - \delta_0^2}} \leq a. \label{eq:h}
\end{equation}
Thus, $ \delta_0 $ must satisfy:
\begin{equation}
    \delta_0 \leq \frac{a(1 - \sqrt{4r^2 + 4a^2 r^2 - a^2})}{2(a^2 + 1)}=:\delta_1(r). \label{eq:deltarange}
\end{equation}
There are exactly two such directions $\beta$ such that $ \delta_\beta = a $. Similarly as above, denote the larger one by $ \beta_1 $, and the smaller one by $ \beta_2 $. Simple geometry (see Figure \ref{fig:l-out-Br}) gives
\begin{equation}
    |OA| = \sqrt{\frac{\left(\frac{2a}{\tan|\theta|} + 1\right) - \sqrt{1 - 4a^{2} + \frac{4a}{\tan|\theta|}}}{2}},
\end{equation}
and $ \beta_1 = \arcsin(\frac{a}{|OA|}) $. When $ \delta_0 $ satisfies \eqref{eq:deltarange}, we have $ \beta_1 \geq \alpha_1 $, $\beta_2 \leq \alpha_2$. Therefore $\beta_2\leq\alpha_2<\alpha_1\leq\beta_1$. Thus $J_\Gamma \subset [\beta_2,\beta_1]$ and we have 
$$ 
\frac{|J_\Gamma|}{|\theta|} \leq \frac{|\beta_1 - \beta_2|}{|\theta|} = \frac{2\arcsin(\frac{a}{|OA|}) -|\theta|}{|\theta|}. 
$$ 
Without a bound on the needle's location, the interior area might tend to 0, yielding $ \mathcal{H}_1^*(E \cap S_r) \geq 0 $. This is because as $ |\theta| \to 0 $, $ |OA| \to \infty $, then 
$$ 
\frac{2\arcsin(\frac{a}{|OA|}) - |\theta|}{|\theta|} \to \infty. 
$$
\begin{figure}
    \centering
    \begin{tikzpicture}[scale=5.5, line join=round, font=\normalsize]
        \coordinate (O) at (0,0);
        
        \def\angle{6}
        \coordinate (E) at (0:1/3);
        \coordinate (F) at (\angle:1/3);
        \coordinate (E1) at (0:1.5);
        \coordinate (F1) at (\angle:1.5);
        \coordinate (A) at (0.620376788539, 0);
        \coordinate (B) at (1.60602706963,0.16880024697);
        \coordinate (P) at (-80.281929788:pi/30);
        \coordinate (H) at (-90+17.1481610717:0.0982812104998);
       
        
        \draw[thick] (O) circle (1/3);
        \draw[black, thick] (A) -- (O) -- (B) -- cycle;
        \draw[black, thick, densely dashed] (E1) -- (O) -- (F1);
        \draw[dashed,thick] (O)--(P);
        \draw[dashed,thick] (A)--(P);
        \pic [draw, thick, ->, >=stealth,angle radius=22mm, angle eccentricity=1.2, 
                  "{$\theta$}"] {angle = E--O--F};
        \pic [draw, thick, angle radius=1.5mm, angle eccentricity=1.55, 
                  "{}", font=\large] {right angle = O--P--A};
        \fill (O) circle (0.2pt) node[shift={(-0.3,0.05)}] {$O$};
        \fill (A) circle (0.2pt) ;
        \fill (B) circle (0.2pt) ;
        \node[shift={(-0.4,-0.2)}] at (O) {$\delta=a$};
        \node at (-18:0.40) {$S_{r}$};
        \node[below] at (A) {$A$};
        \node[below] at (B) {$B$};
    \end{tikzpicture}
    \caption{The case $l_\alpha\cap B_r = \varnothing$}
    \label{fig:l-out-Br}
\end{figure}
Therefore, a bounded range for the needle $ l_\alpha $, i.e., $ l_\alpha \subset B_{R_1} $, is necessary to ensure a positive lower bound for the arc length. 

Now we introduce a parameter $R_1>1$ to distinguish two cases regarding the location of the needle $l_\alpha$. Our next goal is to find a suitable choice of $ R_1 $.

Consider $ |OB| $ in Figure \ref{fig:l-alpha-in-Br}. We have $ |OB| \sin|\theta| = \frac{h}{|OA|} $ and $ \arcsin\frac{h}{|OA|} = \arcsin\frac{\delta_0}{r} $, where $h$ satisfies (\ref{eq:h}). It follows that $ \frac{\delta_0}{r} = |OB| \sin|\theta| $. Solving for $ |OB| $ yields:
\begin{equation}
    |OB|(\delta_0, r) = \frac{\sqrt{4\delta_0^2 + 1}}{1 - 2\sqrt{r^2 - \delta_0^2}}.
\end{equation}
The function \(|OB|\) is decreasing in \(\delta_0\) and increasing in \(r\). Let \(\lambda \in [0,1]\) and define 
\[
    r_\lambda = \lambda a + (1-\lambda)r_0,
\] 
which lies between \(a\) and \(r_0\). Therefore, 
$$
|OB|(\delta_0, r) \geq |OB|(\delta_1(r_\lambda), r_\lambda)
$$ for \(r_\lambda \leq r \leq r_0\). Let 
\begin{equation}
    R_1 = |OB|(\delta_1(r_\lambda), r_\lambda).
\end{equation}
For convenience, we will use the same notation $J_\Gamma$ to denote 
$$
J_{\Gamma} = \{\alpha \mid \Delta_\alpha \subset B(0,R_1),  \Gamma_{\alpha,1} \subset \Gamma\}.
$$ 
Then, for any \(\beta \in J_\Gamma\), since \(l_\beta \subset B_{R_1}\), it follows that \(\beta \leq \alpha_1\) for any \(r \in [r_\lambda, r_0]\). When \(a \leq r \leq r_\lambda\), every direction \(\beta\) such that \(l_\beta \cap B_r = \varnothing\) and \(\Delta_\beta \cap B_r = \Gamma\) will satisfy \(\beta \leq \arcsin(R_1 \sin|\theta|)\). Since the point $A$ lies on the ray $OE$, similarly we have $\beta\geq|\theta|-\arcsin(R_1\sin|\theta|)$. Thus \(J_\Gamma \subset [\alpha_2, \alpha_1]\) if \(\alpha_1 \geq \arcsin(R_1 \sin|\theta|)\); otherwise, \(J_\Gamma \subset [|\theta| - \arcsin(R_1 \sin|\theta|), \arcsin(R_1 \sin|\theta|)]\).

Summarizing, we have proven the following. 
\begin{lemma}
    If $\Delta_\alpha \subset B(0,R_1)$, then $J_{\Gamma_{\alpha,1}} \subset [\alpha_2,\alpha_1]$, where $\alpha_1,\alpha_2$ correspond to the isosceles triangles $\Delta_{\alpha_1},\Delta_{\alpha_2}$ ($\alpha_2<\alpha_1$) having $\Gamma_{\alpha, 1}$ as an arc, for $r_\lambda\leq r\leq r_0$. 
\end{lemma}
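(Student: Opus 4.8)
The statement collects the case analysis carried out in the preceding discussion, so my plan is to organize that analysis into a single clean argument. Fix $r \in [r_\lambda, r_0]$ and write $\Gamma = \Gamma_{\alpha,1}$ for the larger arc, with endpoints $E, F$ and central angle $\theta = \angle EOF$. Since the configuration is symmetric under reflection across the bisector of $\theta$, it suffices to prove the upper bound $\beta \leq \alpha_1$ for every $\beta \in J_\Gamma$; the lower bound $\beta \geq \alpha_2$ then follows verbatim by reflecting the argument. Throughout I work in the regime $\theta \leq \tfrac{\pi}{2} - \arctan(2r)$, where the relation $\theta = \arcsin(\delta_0/r) - \arctan(2\delta_0)$ sets up a one-to-one correspondence between $\theta$ and the height $\delta_0$ of the isosceles triangle, so that $\alpha_1 = \arcsin(\delta_0/r)$ is well defined.

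First I would dispose of the case $l_\beta \cap B_r \neq \varnothing$. Here any admissible needle with $\Gamma_{\beta,1} \subset \Gamma$ can be pivoted about the endpoint $E$ with its far endpoint sliding along the ray $OF$; as Figure \ref{fig:l-alpha-in-Br} shows, the direction angle $\beta$ is a strictly decreasing function of the chord length (the longer the segment $EB$, the smaller $\beta$). Consequently $\beta$ is maximized exactly at the isosceles configuration, giving $\beta \leq \alpha_1$ with equality only for $\Delta_\beta = \Delta_{\alpha_1}$. This step is purely geometric and needs no positional constraint on the needle.

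The substantive case is $l_\beta \cap B_r = \varnothing$, where without a location constraint $\beta$ can exceed $\alpha_1$: as $\theta \to 0$ one can place ever more distant needles whose larger arc still lies inside $\Gamma$, driving $|J_\Gamma|/\theta$ to infinity. The hypothesis $\Delta_\beta \subset B(0,r_1)$ is precisely what excludes this. The plan is to use the far-endpoint distance $|OB|(\delta_0,r) = \frac{\sqrt{4\delta_0^2+1}}{1 - 2\sqrt{r^2 - \delta_0^2}}$ recorded in the discussion, together with its two monotonicities — decreasing in $\delta_0$ and increasing in $r$. The critical threshold separating $\beta \leq \alpha_1$ from $\beta > \alpha_1$ is the configuration whose perpendicular distance to $O$ equals $a$, i.e. $\delta_0 = \delta_1(r)$ from \eqref{eq:deltarange}; since $\delta_1(r)$ is itself decreasing in $r$, for $r \in [r_\lambda, r_0]$ one has $\delta_0 \leq \delta_1(r) \leq \delta_1(r_\lambda)$. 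The two monotonicities then chain to give $|OB|(\delta_0,r) \geq |OB|(\delta_1(r_\lambda), r_\lambda) = r_1$, so any needle confined to $B_{r_1}$ is strictly shorter than the threshold configuration and therefore yields $\beta \leq \alpha_1$. Combining the two cases gives $J_\Gamma \subset [\alpha_2,\alpha_1]$.

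I expect the calibration of $r_1$ in the second case to be the main obstacle. The delicate point is that a single radius $r_1 = |OB|(\delta_1(r_\lambda), r_\lambda)$ must enforce $\beta \leq \alpha_1$ uniformly over the entire interval $r \in [r_\lambda, r_0]$; this works only because the two monotonicities of $|OB|(\delta_0,r)$ and the monotonicity of $\delta_1(r)$ all point the same way, pushing the worst case to the left endpoint $r = r_\lambda$ at the threshold height $\delta_1(r_\lambda)$. Verifying these monotonicities rigorously on the admissible domain — where $\delta_0 < r \leq r_0 = 1/4$ and the denominator $1 - 2\sqrt{r^2 - \delta_0^2}$ stays positive — is the technical heart of the argument, whereas the reduction to the isosceles extremal in the first case is comparatively transparent.
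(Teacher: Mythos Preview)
Your proposal is correct and follows essentially the same approach as the paper: the lemma there is stated explicitly as a summary of the preceding case analysis (``Summarizing, we have proven the following''), and you have reorganized that same analysis---the isosceles extremal in the case $l_\beta\cap B_r\neq\varnothing$, and the $|OB|$ monotonicity chain with the calibration $r_1=|OB|(\delta_1(r_\lambda),r_\lambda)$ in the case $l_\beta\cap B_r=\varnothing$---into a self-contained argument. One minor wording issue: the ``threshold configuration'' is the Case~2 needle in direction $\alpha_1$, whose perpendicular distance is $h$ from \eqref{eq:h}, not $a$; the condition $h\le a$ (equivalently $\delta_0\le\delta_1(r)$) is what makes that configuration admissible, so your phrase ``whose perpendicular distance to $O$ equals $a$, i.e.\ $\delta_0=\delta_1(r)$'' conflates the threshold needle with the boundary of its admissible range, though this does not affect the monotonicity chain or the conclusion.
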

\begin{lemma}
     Let $\Gamma$ be an arc in $S_r$ with central angle $\theta$ (see Figure \ref{fig:l-alpha-in-Br}), then 
     \begin{equation}
         |J_\Gamma| \leq g(r)|\theta|,
     \end{equation}
where 
\begin{equation}
g(r)=\max\left\{\frac{1+2r}{1-2r},\ \frac{1+2r_\lambda}{1-2r_\lambda},\ \frac{\pi}{ \frac{\pi}{2} - \arctan(2r)}\right\}.
\end{equation}
    \label{lem:control} 
\end{lemma}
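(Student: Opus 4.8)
The plan is to bound the single ratio $|J_\Gamma|/\theta$ uniformly over all admissible arcs $\Gamma$ and then read off $g(r)$ as the maximum of the bounds produced in the various regimes. I would first dispose of the large-angle regime $\theta \ge \frac{\pi}{2} - \arctan(2r)$, where the trivial estimate $|J_\Gamma| \le \pi$ immediately gives $|J_\Gamma|/\theta \le \pi/(\frac{\pi}{2} - \arctan(2r))$; this is the third entry of $g(r)$, so it remains to treat $\theta < \frac{\pi}{2} - \arctan(2r)$.

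For the small-angle regime and $r_\lambda \le r \le r_0$, I would invoke the preceding lemma, which places $J_\Gamma$ between the two extreme directions $\alpha_2 \le \alpha \le \alpha_1$ coming from the isosceles configurations (Figure \ref{fig:l-alpha-in-Br}), so that $|J_\Gamma| \le \alpha_1 - \alpha_2$. Using $\alpha_1 = \arcsin(\delta_0/r)$ and $\theta = \arcsin(\delta_0/r) - \arctan(2\delta_0)$ together with the reflective symmetry $\alpha_2 = \theta - \alpha_1$, this collapses the ratio to the explicit one-variable expression
\[
\frac{|J_\Gamma|}{\theta} \le \frac{\arcsin(\delta_0/r) + \arctan(2\delta_0)}{\arcsin(\delta_0/r) - \arctan(2\delta_0)}.
\]
I would then show this function is decreasing in $\delta_0$, so that its supremum is the limit as $\delta_0 \to 0^+$, which equals $\frac{1/r + 2}{1/r - 2} = \frac{1+2r}{1-2r}$, the first entry of $g(r)$. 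Writing the right-hand side as $(1+t)/(1-t)$ with $t = \arctan(2\delta_0)/\arcsin(\delta_0/r)$, monotonicity reduces to checking that $t$ is decreasing, which follows from the concavity of $\arctan$ and the convexity of $\arcsin$ on the relevant interval (so $\arctan(2\delta_0)/\delta_0$ decreases while $\arcsin(\delta_0/r)/\delta_0$ increases).

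For the remaining band $a \le r \le r_\lambda$, the inequality $\beta \le \alpha_1$ may fail, and I would instead use the localization $l_\beta \subset B_{r_1}$, which forces every needle disjoint from $B_r$ to satisfy $\beta \le \arcsin(r_1 \sin\theta)$ (Figure \ref{fig:l-out-Br}). Combining this with the Case~1 bound yields $|J_\Gamma| \le 2\max\{\alpha_1, \arcsin(r_1\sin\theta)\} - \theta$, so the ratio is controlled by the larger of $\frac{1+2r}{1-2r}$ and $\frac{2\arcsin(r_1\sin\theta) - \theta}{\theta}$. For the second term I would insert the explicit value $r_1 = |OB|(\delta_1(r_\lambda), r_\lambda)$ and show, again by a monotonicity argument in $\theta$, that its supremum over the admissible angles does not exceed $\frac{1+2r_\lambda}{1-2r_\lambda}$, the second entry of $g(r)$. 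Taking the maximum over the three regimes then gives $|J_\Gamma| \le g(r)\theta$.

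The main obstacle I anticipate is this last extremal computation for the band $a \le r \le r_\lambda$: unlike the $r_\lambda \le r \le r_0$ case, where the ratio is monotone and the supremum sits at the convenient endpoint $\delta_0 \to 0$, the function $\frac{2\arcsin(r_1\sin\theta) - \theta}{\theta}$ is increasing in $\theta$, so its supremum is attained at the upper end of the admissible range (either the crossover angle where it meets the Case~1 ratio, or the maximal central angle $\theta_{\max}(r)$). Pinning down which endpoint governs and verifying that the resulting value is bounded by $\frac{1+2r_\lambda}{1-2r_\lambda}$ requires the explicit formula for $|OB|$ and a careful estimate of $r_1$ in terms of $r_\lambda$; this is where the delicate trigonometric bookkeeping concentrates.
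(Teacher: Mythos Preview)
Your proposal follows essentially the same route as the paper: the same three-regime split (large $\theta$; small $\theta$ with $r_\lambda\le r\le r_0$; small $\theta$ with $a\le r\le r_\lambda$), the same reduction to the isosceles parameter $\delta_0$ via $\theta=\arcsin(\delta_0/r)-\arctan(2\delta_0)$ and $|[\alpha_2,\alpha_1]|=2\arcsin(\delta_0/r)-\theta$, and the same use of the alternative bound $2\arcsin(r_1\sin\theta)-\theta$ in the low-$r$ band. Your monotonicity argument for $t=\arctan(2\delta_0)/\arcsin(\delta_0/r)$ is in fact a cleaner justification of the bound $\tfrac{1+2r}{1-2r}$ than the bare inequality the paper records, and your identification of the difficulty in the band $a\le r\le r_\lambda$ (supremum at the upper endpoint rather than at $\delta_0\to 0$) is accurate; the paper handles that step by comparing $\arcsin(r_1\sin\theta)$ back to an $\arcsin(\delta/r_\lambda)$ expression via the definition $r_1=|OB|(\delta_1(r_\lambda),r_\lambda)$, which is exactly the ``delicate trigonometric bookkeeping'' you anticipate.
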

\begin{proof}
    Let \(\delta_0\) be the distance from \(O\) to the needle $l_{\alpha,1}$. We consider two cases.\\
    \textbf{Case 1: }$ |\theta| \leq \frac{\pi}{2} - \arctan(2r) $. In this case, \(|\theta| = \arcsin(\delta_0 / r) - \arctan(2\delta_0)\) and \(|\alpha_1- \alpha_2| = 2\arcsin(\delta_0 / r) - |\theta|\). Their ratio satisfies:
    \[
    \frac{|J_\Gamma|}{|\theta|} \leq \frac{|\alpha_1- \alpha_2|}{|\theta|} \leq \frac{1+2r}{1-2r},
    \]
    since
    \begin{equation*}
        \frac{2\arcsin(\delta_0 / r)}{\arcsin(\delta_0 / r) - \arctan(2\delta_0)} \leq \frac{2}{1 - 2r}.
    \end{equation*}
    For \(a \leq r \leq r_\lambda\),
    \[
    \frac{|J_\Gamma|}{|\theta|} \leq \max \left\{ \frac{|\alpha_1-\alpha_2|}{|\theta|}, \frac{|2\arcsin(R_1 \sin|\theta|) - |\theta||}{|\theta|} \right\} \leq \frac{1+2r_\lambda}{1-2r_\lambda}.
    \]
    since 
    \begin{equation*}
        \frac{|\alpha_1-\alpha_2|}{|\theta|}\leq\frac{1+2r}{1-2r}\le\frac{1+2r_\lambda}{1-2r_\lambda},
    \end{equation*} 
    and
    \begin{equation*}
         \frac{|2\arcsin(R_1 \sin|\theta|) - |\theta||}{|\theta|}\leq \frac{|2\arcsin(\delta_0 / r_\lambda) - |\theta||}{|\theta|}\leq\frac{1+2r_\lambda}{1-2r_\lambda}.
    \end{equation*}
    \textbf{Case 2: }$ |\theta| \geq \frac{\pi}{2} - \arctan(2r) $. In this case, we have
    $$
    \frac{|J_\Gamma|}{|\theta|}\leq\frac{\pi}{ \frac{\pi}{2} - \arctan(2r)}.
    $$
    Combining these two cases, we complete the proof.
\end{proof}

Next, we introduce a parameter $ p \in [0, 1] $ to partition the range of directions.
 
\textbf{Case 2.1:} ${\mathcal{L}_1^*(\{\alpha\mid\Delta_\alpha \subset B_{R_1}\})\geq p\pi}.$

Define $\mathscr{A}=\{\alpha \mid \Delta_\alpha\subset B_{R_1}\}$. Then we have
\begin{align*}
    \bigcup_{\alpha\in\mathscr{A}} J_{\Gamma_{\alpha,1}} &\subset \bigcup_{\alpha\in\mathscr{A}} g(r) \theta_{\alpha,1}
    && \text{(by Lemma \ref{lem:control})} \\
    &\subset \bigcup_{\alpha,i} g(r) \theta_{\alpha,i}
    && (\text{where } (\alpha,i)\in[0,\pi)\times\{1,2\}).
\end{align*}
The arc length is bounded by:
\begin{align*}
    p\pi &\leq \mathcal{L}_1^* \left(\bigcup_{\alpha\in\mathscr{A}} J_{\Gamma_{\alpha,1}}\right) 
    && (\text{since } \alpha \in J_{\Gamma_{\alpha,1}}) \\
    &\leq \mathcal{L}_1^* \left(\bigcup_{\alpha,i} g(r) \theta_{\alpha,i}\right) 
    && \\
    &\leq \left| \bigcup_k  g(r) \left( \frac{2}{1-\varepsilon} + 1 \right) \widetilde{\theta}_k \right|
    &&  (\text{by } (\ref{eq:thetavitali}))\\
    &\leq \sum_k g(r) \left( \frac{2}{1-\varepsilon} + 1 \right) |\widetilde{\theta}_k|
    && \text{(by the subadditivity of measure)}.
\end{align*}
As $\widetilde{\Gamma}_k$ are disjoint, we have $\mathcal{H}_1^* (E\cap S_r) \geq \sum |\widetilde{\Gamma}_k|=\sum r\cdot|\widetilde{\theta}_k|$. Taking $\varepsilon \to 0$, we obtain:
\begin{equation}
    \mathcal{H}_1^*(E \cap S_r) \geq p\frac{\pi}{3} \cdot \frac{1}{g(r)}r.
\end{equation}

Thus, by Lemma \ref{lem:inplusout}, the outer measure of $E$ satisfies:
\begin{equation}
\label{eq:caseIbound}
\mathcal{L}_2^*(E) \geq p\frac{\pi}{3} \left(1 - \frac{f(r_0)}{2r_0^2}\right) \left(\int_a^{r_0} \frac{1}{g(r)} r\,dr\right) + \frac{\pi}{4} f(r_0),  
\end{equation}
where $f(r) = \frac{1}{2}r(2r-1)^2$.

\textbf{Case 2.2:} ${\mathcal{L}_1^*(\{\alpha\mid\Delta_\alpha\subset B_{R_1}\})\leq p\pi}.$

In this case $\mathcal{L}_1^*(\{\alpha\mid\Delta_\alpha \not\subset B_{R_1}\})\geq \mathcal{L}_1^*([0,\pi))-\mathcal{L}_1^*(\{\alpha\mid\Delta_\alpha\subset B_{R_1}\})\geq(1-p)\pi$. 
Thus, a positive proportion of directions corresponds to triangles lying outside the disk $B_{R_1}$, which contribute a positive area. To estimate this contribution, we now establish a disjointness property for the interior parts of these triangles.
\begin{lemma}
    \label{the:outofcircle}
    If two needles in directions $\alpha_1,\alpha_2$ lie outside $S_r$, and $|\alpha_1 - \alpha_2| \geq \arcsin (\delta_1 / r)  + \arcsin (\delta_2 / r)$, then $\mathring\Delta_1^{\text{\rm int}} \cap \mathring\Delta_2^{\text{\rm int}} = \varnothing$. 
\end{lemma}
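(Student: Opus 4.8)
The plan is to exploit the hypothesis that both needles lie outside $B_r$ in order to show that each interior piece $\Delta_i^{\text{\rm int}}$ is not merely contained in, but exactly equal to, a circular sector of $B_r$ with apex $O$. Disjointness of the interiors then reduces to disjointness of two angular intervals, which is precisely what the hypothesis on $|\alpha_1-\alpha_2|$ supplies. This is the interior analogue of Lemma \ref{lem:sepreate}, but the geometry is cleaner: once the base of the triangle has been pushed outside $B_r$, the portion of $\Delta_\alpha$ inside $B_r$ no longer feels the base at all.

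First I would fix one needle $l_\alpha$ with $l_\alpha\cap B_r=\varnothing$ and endpoints $A,B$, and establish that $\Delta_\alpha\cap B_r$ equals the circular sector $\{Q:|OQ|<r,\ \arg Q\in[\arg B,\arg A]\}$. The forward inclusion is immediate from convexity. For the reverse inclusion I would take $Q\in B_r$ whose polar angle lies between those of $A$ and $B$, follow the ray $OQ$ until it meets the segment $AB$ at a point $R$, and observe that $|OR|\geq r>|OQ|$ because the entire segment $AB$ lies outside $B_r$; hence $Q$ lies on $OR\subset\Delta_\alpha$. The hypothesis that the needle lies outside $B_r$ enters exactly here, guaranteeing that the base never clips the sector.

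Next I would bound the angular interval $[\arg B,\arg A]$. For any point $P$ on the needle, writing $\rho=|OP|$ and letting $F$ be the foot of the perpendicular from $O$ to the line through $l_\alpha$ (so $|OF|=\delta$), the right triangle $OFP$ gives $\sin(\angle OPF)=\delta/\rho$, i.e. the ray $OP$ makes angle $\arcsin(\delta/\rho)$ with the needle direction $\alpha$. Since $l_\alpha\cap B_r=\varnothing$ we have $\rho\geq r$, whence $\arcsin(\delta/\rho)\leq\arcsin(\delta/r)$. Applying this to $P=A$ and $P=B$ yields $[\arg B,\arg A]\subset[\alpha-\arcsin(\delta/r),\,\alpha+\arcsin(\delta/r)]$. (I would also note that, because $F\in\overline{B_r}$ while the needle is outside $B_r$, the foot is not on the segment, so $A$ and $B$ lie on the same side of $F$ and the interval is in fact one-sided; this refinement is not needed for the conclusion, but it clarifies the picture and fixes the choice of representative for $\alpha$.)

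Finally, with both interior parts realized as sectors whose angular intervals sit inside the symmetric windows of half-widths $\arcsin(\delta_1/r)$ and $\arcsin(\delta_2/r)$ about $\alpha_1$ and $\alpha_2$, the hypothesis $|\alpha_1-\alpha_2|\geq\arcsin(\delta_1/r)+\arcsin(\delta_2/r)$ forces these two windows to have disjoint interiors. Two sectors sharing the apex $O$ have disjoint interiors as soon as their angular intervals do, so $\mathring\Delta_1^{\text{\rm int}}\cap\mathring\Delta_2^{\text{\rm int}}=\varnothing$. The only genuinely delicate steps are the exact-sector identification and the $\rho\geq r$ estimate; once those are in place the conclusion is a purely one-dimensional statement about intervals.
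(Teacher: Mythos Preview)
Your argument is correct and is a genuinely different route from the paper's. You identify $\Delta_\alpha^{\text{int}}$ exactly as the circular sector cut out by the rays $OA$, $OB$ (this is where the hypothesis $l_\alpha\cap B_r=\varnothing$ is used), bound its angular extent by $\arcsin(\delta/r)$ using $\rho\ge r$ on the needle, and finish with a one-dimensional interval-separation argument. The paper instead argues by contradiction: assuming a common interior point $P$, it follows the ray $OP$ out to the needles, notes that the nearer intersection $K_1$ already lies outside $S_r$, and deduces that the segment $QK_1$ (with $Q\in S_r$) sits in $\mathring\Delta_1^{\text{ext}}\cap\mathring\Delta_2^{\text{ext}}$, contradicting Lemma~\ref{lem:sepreate}. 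So the paper recycles the exterior disjointness lemma rather than proving anything new, while your approach is self-contained and yields the sharper geometric picture that each $\Delta_i^{\text{int}}$ is an honest sector. One small point to tidy: the inclusion $[\arg B,\arg A]\subset[\alpha-\arcsin(\delta/r),\alpha+\arcsin(\delta/r)]$ is really a statement modulo $\pi$ (each sector lies in one component of a double wedge about the line direction $\alpha$), and the hypothesis should likewise be read mod $\pi$ as in Lemma~\ref{lem:sepreate}; your parenthetical about ``choice of representative'' is gesturing at this, but it is worth making explicit.
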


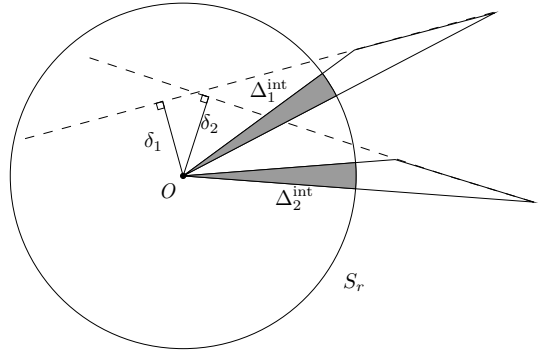
\begin{figure}[!ht]
    \centering
     \scalebox{0.8}{
    \begin{tikzpicture}[x=0.75pt,y=0.75pt,yscale=-1,xscale=1,font=\Large]
            
            \draw   (233,173) .. controls (233,112.8) and (281.8,64) .. (342,64) .. controls (402.2,64) and (451,112.8) .. (451,173) .. controls (451,233.2) and (402.2,282) .. (342,282) .. controls (281.8,282) and (233,233.2) .. (233,173) -- cycle ;
            \draw  [fill={rgb, 255:red, 0; green, 0; blue, 0 }  ,fill opacity=1 ] (340.34,173) .. controls (340.34,172.09) and (341.08,171.34) .. (342,171.34) .. controls (342.91,171.34) and (343.66,172.08) .. (343.66,173) .. controls (343.66,173.91) and (342.92,174.66) .. (342,174.66) .. controls (341.09,174.66) and (340.34,173.92) .. (340.34,173) -- cycle ;
            \draw   (449.99,93.59) -- (538.34,70.18) -- (342,173) -- cycle ;
            \draw   (475.65,162.7) -- (563.02,189.54) -- (342,173) -- cycle ;
            \draw  [dash pattern={on 4.5pt off 4.5pt}]  (283,98.5) -- (506.55,172.3) -- (524.34,177.73) -- (563.02,189.54) ;
            \draw  [dash pattern={on 4.5pt off 4.5pt}]  (242.33,149.5) -- (361.53,117.35) -- (421.13,101.28) -- (480.73,85.2) -- (540.32,69.13) ;
            \draw   (327.76,126.34) -- (329.07,125.98) -- (342,173) ;
            \draw   (329.07,125.98) -- (330.25,130.06) -- (326.17,131.25) -- (324.99,127.17) -- cycle ;
            \draw  [fill={rgb, 255:red, 155; green, 155; blue, 155 }  ,fill opacity=1 ] (342,173) -- (429.82,108.42) .. controls (433.08,112.85) and (436.01,117.52) .. (438.58,122.42) -- (342,173) -- cycle ;
            \draw  [fill={rgb, 255:red, 155; green, 155; blue, 155 }  ,fill opacity=1 ] (451,173) .. controls (451,175.74) and (450.9,178.45) .. (450.7,181.14) -- (342,173) -- (450.68,164.62) .. controls (450.89,167.39) and (451,170.18) .. (451,173) -- cycle ;
            \draw   (353.71,121.79) -- (358.08,123.2) -- (342,173) ;
            \draw   (354.21,121.95) -- (358.08,123.2) -- (356.83,127.07) -- (352.96,125.82) -- cycle ;
            
            \draw (325.33,176.68) node [anchor=north west][inner sep=0.75pt]   [align=left] {\textit{O}};
            \draw (316.17,144.5) node [anchor=north west][inner sep=0.75pt]   [align=left] {$\displaystyle \delta _{1}$};
            \draw (351.6,133.13) node [anchor=north west][inner sep=0.75pt]   [align=left] {$\displaystyle \delta _{2}$};
            \draw (398.67,178.33) node [anchor=north west][inner sep=0.75pt]   [align=left] {$\displaystyle \Delta^{\text{\rm int}}_{2}$};
            \draw (382.67,110.33) node [anchor=north west][inner sep=0.75pt]   [align=left] {$\displaystyle \Delta ^{\text{\rm int}}_{1}$};
            \node at (450,240) {$S_{r}$};
        \end{tikzpicture}
        }
    \caption{$\Delta_1^{\text{\rm int}}$ and $\Delta_2^{\text{\rm int}}$ are disjoint for sufficiently separated directions}
    \label{fig:outofcircle}
\end{figure}

\begin{proof}
    Suppose $P \in \mathring\Delta_1^{\text{\rm int}} \cap \mathring\Delta_2^{\text{\rm int}}$ (see Figure \ref{fig:outofcircle}). The ray $OP$ intersects the needles at $K_1,K_2$ and intersect $S_r$ at $Q$. Assume $OK_1 \subset OK_2$. Then $OK_1 \subset \mathring\Delta_1 \cap \mathring\Delta_2$, so $QK_1 \subset \mathring\Delta_1^{\text{\rm ext}} \cap \mathring\Delta_2^{\text{\rm ext}}$, contradicting $|\alpha_1 - \alpha_2| \geq \arcsin(\delta_1/r) + \arcsin(\delta_2/r)$.
\end{proof}

The needles which are not contained in $B(0,R_1)$ are disjoint from $B(0,R_1 - 1)$. Analogous to Cunningham's proof, we have the following (the proof is by simple calculus, so we omit it here):
\begin{lemma}
    \label{the:min}
    For $x \in [0,a]$, $\frac{x}{2 \arcsin(x/r)} \geq c(r)$, where $c(r) = \frac{a}{2 \arcsin(a/r)}$.
\end{lemma}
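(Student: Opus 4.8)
The plan is to show that the function $\psi(x) = \dfrac{x}{\arcsin(x/r)}$ is nonincreasing on $[0,a]$, so that its minimum over this interval is attained at the right endpoint $x = a$; dividing by the harmless constant factor $2$ then yields exactly the claimed bound $\frac{x}{2\arcsin(x/r)} \geq \frac{a}{2\arcsin(a/r)} = c(r)$. Here I would use implicitly that $r \geq a$ (which holds throughout the relevant range $r \in [a, r_0]$), so that $x/r \in [0,1]$ and $\arcsin(x/r)$ is well defined and positive for $x > 0$. It is convenient to substitute $t = x/r \in [0, a/r] \subset [0,1]$ and instead prove that $u(t) := \dfrac{t}{\arcsin t}$ is nonincreasing, since $\psi(x) = r\,u(x/r)$ and $r$ is fixed.

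First I would differentiate:
\[
u'(t) = \frac{\arcsin t - \dfrac{t}{\sqrt{1-t^2}}}{(\arcsin t)^2},
\]
so that the sign of $u'$ agrees with the sign of the auxiliary function $v(t) := \arcsin t - \dfrac{t}{\sqrt{1-t^2}}$. A short computation gives $\dfrac{d}{dt}\dfrac{t}{\sqrt{1-t^2}} = (1-t^2)^{-3/2}$, whence
\[
v'(t) = \frac{1}{\sqrt{1-t^2}} - \frac{1}{(1-t^2)^{3/2}} = -\frac{t^2}{(1-t^2)^{3/2}} \leq 0
\]
for $t \in [0,1)$. Since $v(0) = 0$, this forces $v(t) \leq 0$ on $[0,1)$, hence $u'(t) \leq 0$, and so $u$ (and therefore $\psi$) is nonincreasing on $[0,a/r]$ (resp.\ $[0,a]$). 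Evaluating at the endpoint yields the claim.

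The argument is essentially routine calculus, so I do not expect a serious obstacle; the only point worth flagging is that the minimum occurs at $x = a$ rather than at $x = 0$. This is consistent with $\arcsin t \geq t$ on $[0,1]$, which gives $\psi(a) = a/\arcsin(a/r) \leq r = \lim_{x \to 0^+}\psi(x)$, so the direction of monotonicity is indeed the one needed. One should also note that $c(r)$ is well defined, i.e.\ $\arcsin(a/r) \neq 0$, which again follows from $r \geq a > 0$.
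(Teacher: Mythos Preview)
Your proof is correct and is precisely the routine calculus argument the paper alludes to; the paper itself omits the proof, remarking only that ``the proof is by simple calculus''. Your monotonicity computation for $u(t)=t/\arcsin t$ via the sign of $v(t)=\arcsin t - t/\sqrt{1-t^2}$ is the natural way to fill in this omission, and your side remarks about the implicit hypothesis $r\geq a$ and the endpoint behaviour at $x=0$ are appropriate.
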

With Lemma 3.4, we can now prove
\begin{theorem}
Let $A = \{\alpha \mid l_\alpha \subset B(0,r)^c\}$. Then we have
\[
\mathcal{L}_2^*(\bigcup_{\alpha\in A}\Delta_\alpha) \geq \frac{\mathcal{L}_1^*(A)}{4} c(r),
\]
\text{where} $c(r)$ \text{is as in Lemma \ref{the:min}}.
\label{the:outcirarea}
\end{theorem}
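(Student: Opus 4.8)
The plan is to transcribe the proof of Theorem~\ref{the:phiareamin} almost verbatim, making two substitutions: replace the exterior pieces $\Delta_n^{\text{\rm ext}}$ by the \emph{whole} triangles $\Delta_n$, and replace the area estimate of Lemma~\ref{lem:minofdelta'} by Lemma~\ref{the:min}. Fix $\varepsilon>0$; to each $\alpha\in A$ attach a needle $l_\alpha\subset E$ of height $\delta(\alpha)<a$, form the intervals
\[
I_\alpha=\Big(\alpha-2\arcsin\!\big(\tfrac{1}{1-\varepsilon}\tfrac{\delta(\alpha)}{r}\big),\ \alpha+2\arcsin\!\big(\tfrac{1}{1-\varepsilon}\tfrac{\delta(\alpha)}{r}\big)\Big)\pmod{\pi},
\]
and select $\{\alpha_n\}$ greedily by always choosing a direction of nearly maximal height in $A\setminus\bigcup_{m<n}I_m$, exactly as before. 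As in Theorem~\ref{the:phiareamin}, any two selected directions then satisfy $|\alpha_m-\alpha_n|\ge\arcsin(\delta_m/r)+\arcsin(\delta_n/r)$.

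The first new ingredient is that, for needles lying outside $B_r$, this separation makes the \emph{full} triangles disjoint. Lemma~\ref{lem:sepreate} gives $\mathring\Delta_m^{\text{\rm ext}}\cap\mathring\Delta_n^{\text{\rm ext}}=\varnothing$, and Lemma~\ref{the:outofcircle}---applicable precisely because every $l_\alpha$, $\alpha\in A$, lies outside $S_r$---gives $\mathring\Delta_m^{\text{\rm int}}\cap\mathring\Delta_n^{\text{\rm int}}=\varnothing$. A point of $\mathring\Delta_m\cap\mathring\Delta_n$ would lie in $B_r$ or in $B_r^c$ and so contradict one of these, whence $\mathring\Delta_m\cap\mathring\Delta_n=\varnothing$ and $\mathcal{L}_2^*(\bigcup_{\alpha\in A}\Delta_\alpha)\ge\sum_n|\Delta_n|=\sum_n\tfrac12\delta_n$.

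The second ingredient is the per-triangle bound: Lemma~\ref{the:min} with $x=\delta_n\in[0,a]$ gives $\tfrac12\delta_n\ge c(r)\arcsin(\delta_n/r)$, the analogue of Lemma~\ref{lem:minofdelta'} with $f$ replaced by $c$. Running the same arcsine-ratio and subadditivity chain as in Theorem~\ref{the:phiareamin},
\[
\mathcal{L}_2^*\Big(\bigcup_{\alpha\in A}\Delta_\alpha\Big)\ge\sum_n\tfrac12\delta_n\ge c(r)\sum_n\arcsin\!\big(\tfrac{\delta_n}{r}\big)\ge\frac{c(r)}{4}\sum_n|I_n|\ge\frac{c(r)}{4}\Big|\bigcup_n I_n\Big|,
\]
so whenever the selected intervals cover $A$---in particular when the selection terminates---letting $\varepsilon\to0$ gives the claimed $\tfrac14 c(r)\mathcal{L}_1^*(A)$.

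The main obstacle is the degenerate branch already present in Theorem~\ref{the:phiareamin}: a non-terminating selection. Writing $b:=\tfrac{c(r)}{4}\sum_n|I_n|\le\sum_n\tfrac12\delta_n$, the case $b=\infty$ is immediate, so suppose $b<\infty$. Then $\delta_n\to0$, every leftover direction in $A_0:=A\setminus\bigcup_n I_n$ is radial, and the segments $\Delta_\alpha\cap B_r$ ($\alpha\in A_0$) sweep out a sector of $B_r$ that is disjoint from every selected $\Delta_n$ (from $\Delta_n^{\text{\rm ext}}$ trivially, from $\Delta_n^{\text{\rm int}}$ by Lemma~\ref{the:outofcircle}). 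A polar-coordinates estimate of the kind in Lemma~\ref{lem:polmeasure} bounds its area below by $\tfrac12 r^2(\mathcal{L}_1^*(A)-\tfrac{4}{c(r)}b)$, so
\[
\mathcal{L}_2^*\Big(\bigcup_{\alpha\in A}\Delta_\alpha\Big)\ge b+\tfrac12 r^2\Big(\mathcal{L}_1^*(A)-\tfrac{4}{c(r)}b\Big)=\tfrac12 r^2\mathcal{L}_1^*(A)+\Big(1-\tfrac{2r^2}{c(r)}\Big)b;
\]
substituting $\mathcal{L}_2^*\ge b$ into the last term---valid once $c(r)\le 2r^2$, which makes its coefficient nonpositive---yields $\mathcal{L}_2^*\ge\tfrac14 c(r)\mathcal{L}_1^*(A)$, exactly as $f(r)\le 2r^2$ was used there. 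I expect the only genuinely delicate point to be verifying $c(r)\le 2r^2$ over the range of $r$ relevant to the application (the inequality is essentially tight near $r=1/4$); everything else is a faithful transcription of the earlier argument with $f$ replaced by $c$ and $\Delta^{\text{\rm ext}}$ by $\Delta$.
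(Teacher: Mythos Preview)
The proposal is correct and follows essentially the same route as the paper's own proof, which is itself just a four-point sketch of the modifications to Theorem~\ref{the:phiareamin}: use the full triangles $\Delta_n$ in place of $\Delta_n^{\text{ext}}$ (their disjointness coming from Lemma~\ref{the:outofcircle} together with Lemma~\ref{lem:sepreate}), invoke Lemma~\ref{the:min} in place of Lemma~\ref{lem:minofdelta'}, and substitute $c$ for $f$ throughout. Your flag on verifying $c(r)\le 2r^2$ is apt, since the paper does not check it explicitly either; the only slight imprecision is citing Lemma~\ref{the:outofcircle} for the disjointness of the radial sector from the selected $\Delta_n$ in the non-terminating branch (the degenerate $\Delta_\alpha$ have empty interior, so the lemma is vacuous as stated), but the underlying fact---that for $l_{\alpha_n}\subset B_r^c$ the angular span of $\Delta_n$ at $O$ lies in $[\alpha_n-\arcsin(\delta_n/r),\alpha_n+\arcsin(\delta_n/r)]\pmod\pi$, hence misses every $\alpha\in A_0$---is exactly what the paper encodes by saying ``$B_r$ becomes $(\sqcup_k\mathring\Delta_k)^c$.''
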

\begin{proof}
The proof is similar to that of Theorem \ref{the:phiareamin}. So we summarize the distinctions below:
\begin{enumerate}
    \item Now, take $A = \{\alpha \mid l_\alpha \subset B(0,r)^c\}$.
    \item The inequality chain (\ref{eq:geqchain}) now becomes
    \begin{equation*}
        \mathcal{L}_2^*(E) \geq \sum_{n=1}^k |\Delta_n|\geq\sum_{n=1}^kc(r)\arcsin(\delta_n/r)\geq \frac{\mathcal{L}_1^*(A)}{4}c(r),
    \end{equation*}
    where we have applied Lemma \ref{the:min} in the second inequality.
    \item In Case 2 of the proof of Theorem \ref{the:phiareamin}, the lower bound (\ref{eq:linemeasure}) now becomes 
    $$
     \mathcal{L}_2^*\left((\sqcup_k\mathring\Delta_k)^c\cap\bigcup_{\alpha\in A_0}l_\alpha\right)\geq\frac{1}{2} r^2 \left( \mathcal{L}_1^*(A) - \frac{4}{f(r)} b \right),
    $$
    and $B_r$ below \eqref{eq:linemeasure} is replaced by $(\sqcup_k\mathring\Delta_k)^c$. 
    \item Throughout the proof, $f(r)$ is replaced by $c(r)$.
\end{enumerate}
The rest of the proof is identical to that of Theorem \ref{the:phiareamin}.
\end{proof}
By Theorem \ref{the:outcirarea}, we obtain the following lower bound in Case II:
\begin{equation}
    \label{eq:caseIIbound}
    \mathcal{L}_2^*(E)\geq \frac{(1-p)\pi}{4}c(R_1-1).
\end{equation}

Now we are ready to prove Theorem \ref{the:mylowerbound}. 
\begin{proof}[Proof of Theorem \ref{the:mylowerbound}]
    Combining Cases I and II above, we have
    \[
    \mathcal{L}_2^*(E) \geq \min \left\{p\frac{\pi}{3} \left(1 - \frac{f(r_0)}{2r_0^2}\right) \left(\int_a^{r_0} \frac{1}{g(r)} r\,dr\right) + \frac{\pi}{4} f(r_0),  \frac{\pi(1-p)}{4} c(R_1-1) \right\}.
    \]
    Now choose $p = 9/10$, $\lambda = 9/10$, which nearly balance the two terms above (see §\ref{subsec:optimization} below for the optimal choice of $p$ and $\lambda$). We get 
    $$
    \mathcal{L}_2^*(E) \geq \min \{ (0.010205\cdots)\pi, (0.0107\cdots)\pi \} = (0.010205\cdots)\pi > \pi/98.
    $$ Thus, we obtain:
    \[
    \mathcal{L}_2^*(E) \geq \frac{\pi}{98}.
    \]
    This completes the proof of Theorem \ref{the:mylowerbound}.
\end{proof}

\section{Remarks}
\subsection{Further refinements}
\label{subsec:optimization}

The choice of parameters described above is not optimal. One can solve a constrained optimization problem to achieve a better bound. More precisely, assuming a universal lower bound $\mathcal{L}_2^*(E) \ge \frac{1}{2}a$ (where $a$ is the parameter chosen to be $\frac{\pi}{49}$ above), we may restrict our attention to the directions $\alpha$ for which the distance $\delta$ from $O$ to the needle $l_\alpha$ satisfies $\delta < a$. Let $r_0$ be the radius of the cut-off circle and $p$ the proportion of the directions as in the proof of Lemma \ref{lem:control}. Now choose $p$ such that the expressions from \eqref{eq:caseIbound} and \eqref{eq:caseIIbound} balance:
\[
p\frac{\pi}{3}\Bigl(1-\frac{f(r_0)}{2r_0^2}\Bigr)\Bigl(\int_a^{r_0}\frac{r}{g(r)}\,dr\Bigr)+\frac{\pi}{4}f(r_0)=\frac{\pi(1-p)}{4}c(R_1-1).
\]
Then choose $a$ and $r$ to enlarge this minimum under the constraint
\[
\frac{\pi(1-p)}{4}c\ge\frac12 a .
\]
Finally, taking $a\in[0.06473,0.06474]$, $r_0\in[0.22785,0.22786]$, $p\in[0.88794,0.88795]$, and $\lambda\in[0.90696,0.90697]$ gives an improved lower bound:
\begin{equation}
    \mathcal{L}_2^*(E) \ge (0.01030\cdots)\pi .
    \label{eq:largerbound}
\end{equation}

We remark also that, even the lower bound obtained in such a way is not optimal. Indeed, a better bound can be obtained iteratively, as follows: By rescaling the configuration from $B_{R_1}$ to $B_a$, one has
\[
\mathcal{L}_2^*(E\cap B_a)\ge\Bigl(\frac{a}{R_1}\Bigr)^2\cdot\Bigl[\, p\frac{\pi}{3}\Bigl(1-\frac{f(r_0)}{2r_0^2}\Bigr)\Bigl(\int_a^{r_0}\frac{r}{g(r)}\,dr\Bigr)+\frac{\pi}{4}f(r_0)\Bigr].
\]
Adding the contribution from the annulus $B_{r_0}\setminus B_a$;
\[
\mathcal{L}_2^*(E\cap (B_{r_0}\setminus B_a))\ge p\frac{\pi}{3}\int_a^{r_0}\frac{r}{g(r)}\,dr,
\]
we obtain a larger lower bound for $\mathcal{L}_2^*(E\cap B_{r_0})$ than the one obtained by considering the annulus alone. Plugging this improved lower bound for the inner part into Lemma~\ref{lem:inplusout} then yields a better global bound. Repeating this argument yields the lower bound
\begin{equation}
    \mathcal{L}_2^*(E)\ge\frac{p\frac{\pi}{3}\bigl(1-\frac{f(r_0)}{2r_0^2}\bigr)\bigl(\int_a^{r_0}\frac{r}{g(r)}\,dr\bigr)+\frac{\pi}{4}f(r_0)}{1-\bigl(1-\frac{f(r_0)}{2r_0^2}\bigr)\bigl(\frac{a}{R_1}\bigr)^2},
    \label{eq:iterativecaseIbound}
\end{equation}
which is exactly \eqref{eq:caseIbound} multiplied by $\displaystyle\frac{1}{1-\bigl(1-\frac{f(r_0)}{2r_0^2}\bigr)\bigl(\frac{a}{R_1}\bigr)^2}$. Finally, taking\\$a\in[0.06600,0.06601]$, $r_0\in[0.24072,0.24073]$, $p\in[0.87454,0.87455]$, and $\lambda\in[0.16679,0.16680]$ gives a larger lower bound:
\begin{equation}
    \mathcal{L}_2^*(E)\ge (0.01050\cdots)\pi .
    \label{eq:iterativelowerbound}
\end{equation}
\subsection{A double integral approach}
\label{subsec:integral-approach}

Another possible approach to improve the lower bound is to analyze the following integral:
\begin{equation*}
\int_0^\infty \mathcal{H}_1^*\left( E\cap S_r \right) dr
\end{equation*}
using Fubini's theorem. More precisely, one may select a disjoint collection of arcs $\widetilde{\Gamma}_k$ from $\{\Delta_\alpha \cap S_r\mid\alpha\in[0,\pi)\}$. For each such arc, consider the angle $\Delta\theta_k = \angle OAB$, where $A$ and $B$ are the endpoints of the needle corresponding to the arc. By relating the arc length $|\widetilde{\Gamma}_k|$ to the angle $\Delta\lambda_k$ via a function $f$ such that $|\widetilde{\Gamma}_k| = f(\Delta_{\alpha_k},r) \cdot |\Delta\lambda_k|$, and applying Fubini's theorem, it would then suffice to establish a lower bound for $\int_0^\infty f(\Delta_\alpha,r)\, dr$, which can be further analyzed by minimizing the function over its domain.

\subsection{Related problems}
\label{subsec:open-problems}
\subsubsection*{Problem 1}

It is unknown whether there exists a star-shaped Kakeya set \(E\) with area smaller than the one given by Cunningham and Schoenberg in~\cite{Cunningham_Schoenberg_1965}, i.e., with \(\mathcal{L}_2^*(E) < \frac{(5 - 2\sqrt{2})\pi}{24}\). Moreover, the construction in~\cite{Cunningham_Schoenberg_1965} allows continuous rotation of the needle; even under this stronger condition, it remains unclear whether their construction is optimal. 

\subsubsection*{Problem 2}
To the best of our knowledge, the star-shaped Kakeya problem in dimensions three and higher has not been studied before. However, continuously parametrized Kakeya sets in $\mathbb{R}^n$ have been considered, for example, in~\cite{Järvenpää_Järvenpää_Keleti_Máthé_2011} and ~\cite{FuGan2023}. A recent breakthrough by Wang and Zahl ~\cite{WangZahl2025} on the Kakeya conjecture implies that a star-shaped Kakeya set in $\mathbb{R}^3$ would at least have Hausdorff dimension 3. 

\appendix
\section{Technical Lemmas}
This appendix includes two technical lemmas used in Sections \ref{sec:2} and \ref{sec:3}. 
\begin{lemma}
    Let \( A \subset [0, 2\pi) \), and let \( B = [0, r] \). Define
\[
T = \{ (x,y) \in \mathbb{R}^2 : x = \rho \cos \theta, y = \rho \sin \theta, \theta \in A, \rho \in B \}.
\]
Then we have
\[
\mathcal{L}_2^*(T) = \frac{1}{2} r^2 \, \mathcal{L}_1^*(A).
\]
\label{lem:polmeasure}
\end{lemma}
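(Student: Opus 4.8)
The plan is to establish the two inequalities $\mathcal{L}_2^*(T) \le \frac{1}{2} r^2 \mathcal{L}_1^*(A)$ and $\mathcal{L}_2^*(T) \ge \frac{1}{2} r^2 \mathcal{L}_1^*(A)$ separately, since $A$ need not be measurable and we cannot invoke Fubini's theorem directly. For the upper bound, fix $\eta > 0$ and, by the definition of one-dimensional Lebesgue outer measure, cover $A$ by countably many intervals $\{I_j\}$ with $\sum_j |I_j| \le \mathcal{L}_1^*(A) + \eta$; refining if necessary, we may take each $|I_j| \le 2\pi$. Each circular sector $S_j = \{\rho(\cos\theta,\sin\theta) : \rho \in [0,r],\ \theta \in I_j\}$ is measurable with $\mathcal{L}_2(S_j) = \frac{1}{2} r^2 |I_j|$, and $T \subset \bigcup_j S_j$. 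By monotonicity and countable subadditivity of outer measure, $\mathcal{L}_2^*(T) \le \sum_j \frac{1}{2} r^2 |I_j| \le \frac{1}{2} r^2 (\mathcal{L}_1^*(A) + \eta)$, and letting $\eta \to 0$ gives the upper bound.

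For the lower bound, let $\Phi(\rho,\theta) = (\rho\cos\theta, \rho\sin\theta)$ denote the polar map on $[0,r] \times [0,2\pi)$, whose image is the closed disk of radius $r$. By outer regularity of planar Lebesgue measure, choose a measurable hull $V \supset T$ with $\mathcal{L}_2(V) = \mathcal{L}_2^*(T)$. Since $\Phi$ is smooth and bi-Lipschitz on each compact annulus bounded away from the origin, the preimage $\Phi^{-1}(V)$ is Lebesgue measurable in the $(\rho,\theta)$-plane, modulo the null set $\{\rho = 0\}$. Applying Tonelli's theorem to the nonnegative function $\rho\, \mathbf{1}_{\Phi^{-1}(V)}(\rho,\theta)$ together with the change-of-variables formula (Jacobian $\rho$) yields $\mathcal{L}_2(V) = \int_0^{2\pi} \psi(\theta)\, d\theta$, where the slice integral $\psi(\theta) = \int_{\{\rho\,:\,(\rho,\theta)\in\Phi^{-1}(V)\}} \rho\, d\rho$ is a measurable function of $\theta$. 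For every $\theta \in A$, the full radial segment $[0,r]\times\{\theta\}$ lies in $T \subset V$, so $\psi(\theta) \ge \int_0^r \rho\, d\rho = \frac{1}{2} r^2$.

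The passage from the measurable slice function $\psi$ to the non-measurable set $A$ is handled by a level-set device: the set $F = \{\theta : \psi(\theta) \ge \frac{1}{2} r^2\}$ is measurable and contains $A$, hence $\mathcal{L}_1(F) \ge \mathcal{L}_1^*(A)$. Consequently $\mathcal{L}_2^*(T) = \mathcal{L}_2(V) = \int_0^{2\pi} \psi \ge \int_F \psi \ge \frac{1}{2} r^2 \mathcal{L}_1(F) \ge \frac{1}{2} r^2 \mathcal{L}_1^*(A)$, which combined with the upper bound proves the equality. I expect the main subtlety to be the measurability bookkeeping in the lower bound: carefully justifying that the measurable hull $V$ pulls back to a measurable set under $\Phi$ (accounting for the origin and the $2\pi$-periodic identification of angles) so that Tonelli's theorem and the Jacobian factor $\rho$ genuinely apply. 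The level-set trick replacing $A$ by the measurable superset $F$ is precisely what allows the argument to survive the non-measurability of $A$, and is the step to state with care.
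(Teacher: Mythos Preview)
Your proposal is correct and follows essentially the same approach as the paper's proof: both establish the upper bound by covering $A$ with an open (equivalently, countable-interval) superset and passing to the polar image, and both obtain the lower bound via the level-set trick, replacing $A$ by the measurable superset $\{\theta : \psi(\theta)\ge \tfrac12 r^2\}$ of the slice function. The only cosmetic differences are that the paper phrases the lower bound as a proof by contradiction using an open $V$ with $\mathcal{L}_2(V)<\tfrac12 r^2\mathcal{L}_1^*(A)$ rather than your direct argument with a measurable hull, and it defines the slice function directly by $f(\theta)=\int_0^\infty \chi_V(\rho\cos\theta,\rho\sin\theta)\,\rho\,d\rho$ without explicitly discussing the pullback $\Phi^{-1}(V)$.
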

\begin{proof}
First, we show  ${ \mathcal{L}_2^*(T) \leq \frac{1}{2} r^2 \mathcal{L}_1^*(A)}$.
For any \( \varepsilon > 0 \), there exists an open set \( C \supseteq A \) such that \( \mathcal{L}_1(C) < \mathcal{L}_1^*(A) + \varepsilon \). Let \( T_C \) be the image of \( C \times [0, r] \) under the polar map. Then \( T \subset T_C \), and \( T_C \) is measurable. Using polar integration:
\[
\mathcal{L}_2(T_C) = \int_C \int_0^r \rho \, d\rho \, d\theta = \frac{1}{2} r^2 \mathcal{L}_1(C) < \frac{1}{2} r^2 (\mathcal{L}_1^*(A) + \varepsilon).
\]
Hence, \( \mathcal{L}_2^*(T) \leq \frac{1}{2} r^2 \mathcal{L}_1^*(A) \).

Next, we show ${ \mathcal{L}_2^*(T) \geq \frac{1}{2} r^2 \mathcal{L}_1^*(A)}$.
Suppose \( \mathcal{L}_2^*(T) < \frac{1}{2} r^2 \mathcal{L}_1^*(A) \). Then there exists an open set \( V \supseteq T \) with \( \mathcal{L}_2(V) < \frac{1}{2} r^2 \mathcal{L}_1^*(A) \). Define \( f(\theta) = \int_0^\infty \chi_{V} \rho \, d\rho \). By Tonelli’s theorem, \( f \) is measurable. For \( \theta \in A \), \( f(\theta) \geq \frac{1}{2} r^2 \). Let \( E = \{ \theta : f(\theta) \geq \frac{1}{2} r^2 \} \), which is measurable and contains \( A \). Then:
\[
\mathcal{L}_2(V) = \int f(\theta) \, d\theta \geq \int_E \frac{1}{2} r^2 \, d\theta = \frac{1}{2} r^2 \mathcal{L}_1(E) \geq \frac{1}{2} r^2 \mathcal{L}_1^*(A),
\]
a contradiction. Thus, \( \mathcal{L}_2^*(T) \geq \frac{1}{2} r^2 \mathcal{L}_1^*(A) \).

Combining both bounds, we see that \( \mathcal{L}_2^*(T) = \frac{1}{2} r^2 \mathcal{L}_1^*(A) \).
\end{proof}
\begin{lemma}
    \label{lem:polar_fubini}
    For any set $E \subset \mathbb{R}^2$, we have
    \begin{equation}
    \mathcal{L}_2^*(E) \geq \int_{\mathbb{R}^+}^* \mathcal{H}_1^*(E \cap S_r) dr,
    \label{eq:appfubini}
    \end{equation}
    where $S_r$ denotes the circle of radius $r$ centered at the origin.
\end{lemma}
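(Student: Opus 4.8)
The plan is to reduce the inequality for the (possibly non-measurable) set $E$ to an identity for a measurable majorant, exploiting the regularity of Lebesgue outer measure. Since $\mathcal{L}_2^*$ is a Borel-regular outer measure, there exists a Borel (indeed $G_\delta$) set $V \supseteq E$ with $\mathcal{L}_2(V) = \mathcal{L}_2^*(E)$. This measurable hull is the central device: it replaces the arbitrary set $E$ by one to which the classical Fubini/Tonelli theorem applies, while preserving the outer measure exactly.

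First I would establish the polar Fubini identity for the measurable set $V$. Writing each point of $\mathbb{R}^2 \setminus \{0\}$ in polar form $(\rho,\theta)$ and recalling that the area element is $\rho\, d\rho\, d\theta$, Tonelli's theorem gives
\[
\mathcal{L}_2(V) = \int_0^\infty\!\!\int_0^{2\pi} \chi_V(\rho\cos\theta,\rho\sin\theta)\,\rho\,d\theta\,d\rho = \int_0^\infty \mathcal{H}_1(V \cap S_\rho)\,d\rho,
\]
where the inner integral equals $\rho\cdot\mathcal{L}_1(\{\theta : (\rho\cos\theta,\rho\sin\theta)\in V\}) = \mathcal{H}_1(V\cap S_\rho)$, since arc length on $S_\rho$ is $\rho$ times angular measure. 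The same theorem guarantees that $r \mapsto \mathcal{H}_1(V\cap S_r)$ is a measurable function of $r$.

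Second I would compare the circular slices. Because $E \subseteq V$, for every $r$ we have $E\cap S_r \subseteq V\cap S_r$; and since $V$ is Borel, the slice $V\cap S_r$ is a Borel (hence $\mathcal{H}_1$-measurable) subset of the circle. Monotonicity of Hausdorff outer measure then yields $\mathcal{H}_1^*(E\cap S_r)\leq \mathcal{H}_1(V\cap S_r)$ for all $r$. Thus $g(r) := \mathcal{H}_1(V\cap S_r)$ is a \emph{measurable} function dominating $r\mapsto \mathcal{H}_1^*(E\cap S_r)$, so it is an admissible competitor in the infimum defining the upper integral.

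Finally, by the definition of the upper integral as the infimum of $\int g'$ over measurable $g' \geq \mathcal{H}_1^*(E\cap S_\cdot)$, the choice $g' = g$ gives
\[
\int_{\mathbb{R}^+}^* \mathcal{H}_1^*(E\cap S_r)\,dr \leq \int_{\mathbb{R}^+} g(r)\,dr = \mathcal{L}_2(V) = \mathcal{L}_2^*(E),
\]
which is exactly the claimed inequality. The only genuine subtlety is the measurability of the radial function $r\mapsto \mathcal{H}_1(V\cap S_r)$ together with the justification of the polar change of variables for $V$; both are delivered by Tonelli's theorem once a Borel hull $V$ is available. Consequently the real crux of the argument is the existence of the measurable hull, after which every remaining step is a routine application of monotonicity and the definition of $\int^*$.
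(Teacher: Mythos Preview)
Your proof is correct and follows essentially the same approach as the paper: both pass to a measurable superset of $E$ (the paper uses an open $G$ with $\mathcal{L}_2(G)<\mathcal{L}_2^*(E)+\varepsilon$ and lets $\varepsilon\to 0$, while you use a $G_\delta$ hull $V$ with $\mathcal{L}_2(V)=\mathcal{L}_2^*(E)$ directly), apply Tonelli in polar coordinates, and use monotonicity on slices to dominate $\mathcal{H}_1^*(E\cap S_r)$ by a measurable function. Your use of the measurable hull is a slight streamlining but not a substantive difference.
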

\begin{proof}
    The equation (\ref{eq:appfubini}) can be justified as follows: for any \( \varepsilon > 0 \), by the definition of outer measure, there exists an open set \( G \subset \mathbb{R}^2 \) such that \( E \subset G \) and \( \mathcal{L}_2(G) < \mathcal{L}_2^*(E) + \varepsilon \), where \( \mathcal{L}_2(G) \) is the Lebesgue measure of \( G \) (note that \( G \) is measurable). Since \( G \) is measurable, we may apply Fubini's theorem in polar coordinates. The measure of \( G \) can be expressed as:
\[
\mathcal{L}_2(G) = \int_0^\infty \mathcal{H}_1(G \cap S_r)\,  dr,
\]
where $\mathcal{H}_1(\cdot)$ on the right-hand side denotes the arc length on \( S_r \). For each \( r > 0 \), the inclusion \( E \cap S_r \subset G \cap S_r \) implies:
\[
\mathcal{H}_1^*(E \cap S_r) \leq \mathcal{H}_1(G \cap S_r).
\]
Integrating both sides with respect to \( r \) gives:
\[
\int_{\mathbb{R}^+}^* \mathcal{H}_1^*(E \cap S_r)\,  dr \leq \int_0^\infty \mathcal{H}_1(G \cap S_r) \, dr = \mathcal{L}_2(G) < \mathcal{L}_2^*(E) + \varepsilon.
\]
Since \( \varepsilon > 0 \) is arbitrary, it follows that:
\[
\int_{\mathbb{R}^+}^* \mathcal{H}_1^*(E \cap S_r) \, dr \leq \mathcal{L}_2^*(E),
\]
which is equivalent to the desired inequality.

\end{proof}


\section*{Acknowledgments}
The author is grateful to Xianghong Chen for his invaluable suggestions and to Lixin Yan for his helpful advice during seminar presentations. Research supported in part by the National Natural Science Foundation of China (Grant Nos. 12371105, 12426204). The author is also grateful to the anonymous referee for their helpful comments which improved the presentation of this work. 
\bibliography{main.bib} 
\nocite{Blank1963}

\end{document}